\PassOptionsToPackage{dvipdfm}{graphicx}
\documentclass[11pt]{amsart}
\setlength\textheight{7.7in}
\setlength\textwidth{6.5in}
\setlength\oddsidemargin{0in}            
\setlength\evensidemargin{0in}
\setlength\parindent{0.25in}
\setlength\marginparwidth{0.8in}
\usepackage{listings}
\usepackage{amsmath, amssymb, amscd, amsthm, mathrsfs, url ,pinlabel, verbatim, lipsum, wrapfig}
\usepackage[text={6.5in,9.5in}, centering, letterpaper, dvips]{geometry}

\usepackage{bmpsize}

\usepackage{color, multirow, latexsym, float, xcolor}
\usepackage{caption, tikz, tikz-cd, setspace, enumitem}

\captionsetup{font=small}

\usetikzlibrary{positioning, arrows.meta, knots, braids}

\usepackage{scalerel,stackengine}
\stackMath
\newcommand\reallywidehat[1]{%
\savestack{\tmpbox}{\stretchto{%
  \scaleto{%
    \scalerel*[\widthof{\ensuremath{#1}}]{\kern-.6pt\bigwedge\kern-.6pt}%
    {\rule[-\textheight/2]{1ex}{\textheight}}%WIDTH-LIMITED BIG WEDGE
  }{\textheight}% 
}{0.5ex}}%
\stackon[1pt]{#1}{\tmpbox}%
}

\usepackage[backref=page, linktocpage=true]{hyperref}

\renewcommand*{\backref}[1]{}
\renewcommand*{\backrefalt}[4]{%
    \ifcase #1 (Not cited.)%
    \or        (Cited on page~#2.)%
    \else      (Cited on pages~#2.)%
    \fi}

\definecolor{maroon}{rgb}{0.5, 0.0, 0.0}
\definecolor{darkblue}{rgb}{0.03, 0.27, 0.49}

\hypersetup{
  colorlinks   = true, %Colours links instead of ugly boxes
  urlcolor     = maroon, %Colour for external hyperlinks
  linkcolor    = darkblue, %Colour of internal links
  citecolor   = maroon %Colour of citations
}

\newcommand{\CommaPunct}{\mathpunct{\raisebox{0.5ex}{,}}}

\usepackage[T1]{fontenc}
\usepackage{lmodern}
\DeclareUrlCommand{\bfurl}{}
\DeclareMathOperator{\Fix}{Fix}

\DeclareMathOperator{\Ker}{Ker}

\DeclareMathOperator{\id}{id}

\newtheorem{thm}{Theorem}[section]

\newtheorem{lem}[thm]{Lemma}
\newtheorem{cor}[thm]{Corollary}
\newtheorem{prop}[thm]{Proposition}

\theoremstyle{definition}
\newtheorem{defn}[thm]{Definition}

\newtheorem{exmp}{Example}[section]

\newtheorem{introthm}{Theorem}

\newtheorem{introprob}{Problem}

\theoremstyle{remark}
\newtheorem{rem}{Remark}
\newcommand{\T}{\mathcal{T}}
\renewcommand{\L}{\mathcal{L}}
\newcommand\Z{\mathbb{Z}}
\newcommand\Q{\mathbb{Q}}
\newcommand\R{\mathbb{R}}
\newcommand\C{\mathcal{C}}
\newcommand\CQ{\mathcal{C}_{\mathbb{Q}}}
\newcommand\CT{\widetilde{\mathcal{C}}}
\newcommand\CTQ{\widetilde{\mathcal{C}}_{\mathbb{Q}}}

\newcommand\J{\mathcal{J}}
\newcommand\JO{\mathcal{J}_{\mathrm{odd}}}
\newcommand\JE{\mathcal{J}_{\mathrm{even}}}
\usepackage[colorinlistoftodos,textwidth=3cm]{todonotes}

\newlength\Colsep
\setlength\Colsep{10pt}

\title{Equivariant $\mathbb{Q}$-sliceness of strongly invertible knots}

\author{Alessio Di Prisa}
\address{Scuola Normale Superiore, 56126 Pisa, Italy}
\email{\url{alessio.diprisa@sns.it}}
\urladdr{\url{https://sites.google.com/view/alessiodiprisa}}

\author{O{\u{g}}uz \c{S}avk}
\address{CNRS and Laboratorie de Math\'ematiques Jean Leray, Nantes Universit\'e, 44322 Nantes, France}
\email{\url{oguz.savk@cnrs.fr}}
\urladdr{\url{https://sites.google.com/view/oguzsavk}}

\date{}

\begin{document}

\begin{abstract}

We introduce and study the notion of equivariant $\mathbb{Q}$-sliceness for strongly invertible knots. On the constructive side, we prove that every \emph{Klein amphichiral knot}, which is a strongly invertible knot admitting a \emph{compatible} negative amphichiral involution, is equivariant $\mathbb{Q}$-slice in a single $\Q$-homology $4$-ball, by refining Kawauchi's construction and generalizing Levine's uniqueness result. On the obstructive side, we show that the equivariant version of the classical Fox-Milnor condition, proved recently by the first author in \cite{DP23b}, also obstructs equivariant $\Q$-sliceness. We then introduce the equivariant $\mathbb{Q}$-concordance group and study the natural maps between concordance groups as an application. We also list some open problems for future study.
\end{abstract}

\maketitle

\section{Introduction}
\label{sec:intro}

A knot $K \subset S^3$ is called \emph{invertible} if $K$ is isotopic to its reverse $-K$. Similarly, $K$ is said to be \emph{negative amphichiral} if $K$ is isotopic to the reverse of its mirror image $-\overline{K}$. When the isotopy maps $\rho$ and $\tau$ are further chosen to be involutions (see {\sc\S}\ref{sec:symmetry}), the pairs $(K, \rho)$ and $(K, \tau)$ are called \emph{strongly invertible} and \emph{strongly negative amphichiral}, respectively.

The search for sliceness notions for strongly invertible and strongly negative amphichiral knots dates back to the nineteen-eighties. In his influential article \cite{Sak86}, Sakuma introduced the notion of \emph{equivariant sliceness} for strongly invertible knots, by requiring them to bound equivariant disks smoothly and properly embedded in $B^4$. Cochran and Kawauchi first observed the \emph{$\Q$-sliceness} for the figure-eight knot and the $(2,1)$-cable of the figure-eight knot, respectively, in the sense that these knots bound disks smoothly properly embedded in some $\Q$-homology $4$-balls. Kawauchi's observation appeared in an unpublished note \cite{Kaw80}, and Cochran used the earlier work of Fintushel and Stern \cite{FS84}, which was never published. Later, Kawauchi \cite{Kaw09} proved his famous characterization result, showing that every strongly negative amphichiral knot is $\Q$-slice. See {\sc\S}\ref{sec:applications} for more details.

The main objective of this article is to merge the concepts of equivariant sliceness and $\Q$-sliceness and to introduce the study of equivariant $\mathbb{Q}$-sliceness. We call a strongly invertible knot $(K, \rho)$ \emph{equivariant $\Q$-slice} if $K$ bounds an equivariant disk smoothly properly embedded in a $\Q$-homology $4$-ball (see {\sc\S}\ref{sec:equivariant-slice}).

\subsection{Fundamentals}
\label{sec:fundamentals}

Combining these two notions leads us to study the relations between knot symmetries and introduce a specific type of symmetry, which we call \emph{Klein amphichirality}. A \emph{Klein amphichiral} knot $K \subset S^3$ is a triple $(K, \rho, \tau)$ such that the strongly invertible involution $\rho$ and strongly negative amphichiral involution $\tau$ commute with each other. This explains the motivation behind the name (see {\sc\S}\ref{sec:equivariant-slice} for more details), since the maps $\rho$ and $\tau$ together generate the \emph{Klein four group} (\emph{Vierergruppe} in German) in the symmetry group of the knot: $$V = \Z / 2\Z \times \Z / 2\Z \ \leq \ \mathrm{Sym} (S^3, K) .$$

Our first theorem provides a refinement of Kawauchi's characterization of the $\Q$-sliceness of strongly negative amphichiral knots. Kawauchi constructed a $\Q$-homology $4$-ball $Z_K$ in which $(K,\tau)$ is slice a priori depending on the given knot. However, Levine \cite{Lev23} surprisingly proved that such manifolds $Z_K$ are actually all diffeomorphic to a single $\Q$-homology ball $Z$, called the \emph{Kawauchi manifold}. Moreover, our theorem also extends Levine's crucial result to the equivariant case, showing that the pairs $(Z,\rho_K)$, corresponding to a $\Q$-homology $4$-ball for a Klein amphichiral knot $(K, \rho, \tau)$, is also unique.

\begin{introthm}
\label{thm:mainthm1}
Let $(K, \rho, \tau)$ be a Klein amphichiral knot. Then $(K, \rho)$ is equivairant $\Q$-slice. In fact, $(K, \rho)$ bounds a slice disk in the Kawauchi manifold $Z$ which is invariant under an involution $\rho_K$ of $Z$, extending $\rho$. Moreover, the involution $\rho_K$ does not depend on $(K,\rho,\tau)$ up to conjugacy in $\operatorname{Diff}(Z)$, where $\operatorname{Diff}(Z)$ denotes the group of diffeomorphisms of $Z$.
\end{introthm}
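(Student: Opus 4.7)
The plan is to run Kawauchi's construction equivariantly with respect to the commuting involution $\rho$, and then upgrade Levine's uniqueness theorem by tracking the $\rho$-action through his identification of $Z_K$ with the fixed manifold $Z$.

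First, I would set up Kawauchi's construction of $Z_K$ in a form in which the symmetry group $V = \langle \rho, \tau\rangle$ acts naturally. Kawauchi's $\Q$-ball is obtained from the knot exterior $X_K = S^3 \setminus N(K)$ together with the strongly negative amphichiral involution $\tau$, by a cut-and-paste / branched-cover construction whose input is precisely the pair $(X_K, \tau)$. Because $\rho$ commutes with $\tau$, it acts on this pair and on each auxiliary object: the quotient $X_K/\tau$, the fixed-point set of $\tau$, the branched double cover, and the $4$-dimensional mapping-cylinder-type piece glued in. This gives, essentially by functoriality, an involution $\rho_K$ on $Z_K$ extending $\rho$ on $\partial Z_K = S^3$. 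Kawauchi's slice disk for $K$ is constructed as the image of a $\tau$-invariant arc/cobordism; since $\rho$ commutes with $\tau$ and restricts to $\rho$ on $K$, this disk is preserved setwise by $\rho_K$, establishing the first two claims.

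For the uniqueness statement, I would revisit Levine's argument. Levine identifies $Z_K$ with the fixed Kawauchi manifold $Z$ by an explicit cobordism argument: he builds a $\Q$-homology cobordism from $Z_K$ to $Z$ and shows it is trivial. The plan is to produce this cobordism equivariantly. Concretely, because $\rho$ extends across every piece of Kawauchi's construction (by the commutativity argument above), the auxiliary $3$- and $4$-manifolds that Levine glues to $Z_K$ are themselves equipped with commuting involutions, and Levine's identifications can be chosen to intertwine them. This should yield an equivariant diffeomorphism $(Z_K, \rho_K) \cong (Z, \rho_Z)$ for a single involution $\rho_Z$ on $Z$, independent of $(K,\rho,\tau)$.

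The main obstacle I anticipate is this last step: while making Kawauchi's construction equivariant is essentially a matter of bookkeeping (the Klein-four hypothesis is designed precisely so that the construction respects $\rho$), upgrading Levine's uniqueness to an equivariant statement is more delicate. Even after one arranges all $\rho_K$'s to agree on $\partial Z = S^3$ up to isotopy, two involutions on $Z$ extending the same boundary action need not be conjugate in $\operatorname{Diff}(Z)$. I would attack this either by (i) giving a canonical model for $(Z, \rho_Z)$ — for instance, realizing $Z$ as an explicit equivariantly symmetric manifold whose $\rho$-symmetry is visibly independent of the knot — or (ii) reworking Levine's proof in the category of $\Z/2$-equivariant manifolds, checking that each step (the construction of the intermediate cobordism and the argument that it is trivial) carries through equivariantly. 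I expect approach (ii) to be cleaner, since Levine's cobordism is itself built from $\tau$-natural data on which $\rho$ acts. If that argument goes through, the conclusion that $\rho_K$ is unique up to conjugacy in $\operatorname{Diff}(Z)$ follows immediately.
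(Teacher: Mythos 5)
Your first half follows the paper's route: extend $\rho$ and $\tau$ as a product over $S^3\times[0,1]$ and over a $0$-framed $2$-handle attached along $K$, and use the commutativity of $\rho$ with $\tau$ to make $\rho$ descend. One correction of detail: Kawauchi's construction as used here is not a branched-cover construction. One observes that $\tau$ restricts to a \emph{free} orientation-reversing involution on the boundary component $S^3_0(K)$ of the $2$-handle attachment $X_K$, and glues $X_K$ to itself by $x\sim\tau(x)$ along that component; the slice disk is the product annulus $K\times[0,1]$ capped off with the core of the $2$-handle. Your central point --- that $\rho\circ\tau=\tau\circ\rho$ forces the extended $\rho$ to descend to an involution of $Z_K$ preserving this disk --- is exactly the paper's argument, so this half is essentially correct.

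The gap is in the uniqueness half. ``Rework Levine's proof equivariantly'' is indeed the strategy the paper follows, but you have not identified the one ingredient that genuinely changes. Levine's proof has three inputs: a $5$-dimensional handlebody argument showing that equivariant crossing changes do not alter the diffeomorphism type of the Kawauchi manifold; Boyle--Chen's theorem that every strongly negative amphichiral knot can be unknotted by $\tau$-equivariant crossing changes; and Kankaanrinta's equivariant isotopy extension theorem. The first and third carry over with $\rho$ along for the ride, as you suggest. What does \emph{not} carry over for free is the second: to conclude that all pairs $(Z_K,\rho_K)$ are conjugate you need every Klein amphichiral knot to be reducible to the standard Klein amphichiral unknot by crossing changes equivariant for the whole Klein four-group $V=\langle\rho,\tau\rangle$, not just for $\tau$. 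This is Lemma~\ref{lem:equiv_crossing}, and it is the real new content of the proof. It is established by passing to the quotient $\overline{Y}=Y/V$ of the complement of the fixed-point sets, identifying $\overline{Y}\cong(\mathbb{RP}^2\times I)\natural(\mathbb{RP}^2\times I)$, and showing that the set $\pi_1(\overline{Y},A,B)$ of homotopy classes of arcs from the Klein-bottle boundary component to an $\mathbb{RP}^2$ boundary component is a single point; a generic homotopy of arcs downstairs lifts and closes up to a $V$-equivariant homotopy of knots whose singular times are exactly equivariant crossing changes. Without some such $V$-equivariant unknotting statement your approach does not close up, and your (legitimate) worry about non-conjugate extensions of the same boundary involution is resolved precisely by this lemma rather than by any general principle about equivariant cobordisms.
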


Inspired by the earlier work of Cochran, Franklin, Hedden, and Horn \cite{CFHH13}, we provide a fundamental obstruction, which was already shown in \cite{DP23b} to obstruct equivariant sliceness. It is an equivariant and rational version of the Fox-Milnor condition, which plays a key role in comparing the notions of $\Q$-sliceness and equivariant $\Q$-sliceness. It is sufficient to obstruct well-known strongly invertible $\Q$-slice knots being equivariant $\Q$-slice, including the figure-eight knot, (see {\sc\S}\ref{sec:obstructions} for more details).

\begin{introthm}
\label{thm:mainthm2}
If $(K,\rho)$ is equivariant $\Q$-slice, then its Alexander polynomial $\Delta_K(t)$ is a square.
\end{introthm}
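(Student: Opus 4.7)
The plan is to follow the approach from \cite{DP23b}, which establishes the same conclusion under the stronger hypothesis of equivariant sliceness, and to verify that replacing a standard $4$-ball with a $\Q$-homology $4$-ball introduces no essential difficulty provided one works with $\Q$-coefficients throughout.

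Let $D \subset W$ realize the equivariant $\Q$-sliceness of $(K,\rho)$, with $\widetilde{\rho}\colon W \to W$ an involution preserving $D$ and restricting to $\rho$ on $S^3$. Set $E_K = S^3 \setminus \nu(K)$ and $E_W = W \setminus \nu(D)$, and write $\widetilde{E_K}, \widetilde{E_W}$ for their infinite cyclic covers. First, I would verify the rational Fox--Milnor condition: the Alexander module $A := H_1(\widetilde{E_K};\Q)$ is a torsion $\Q[t^{\pm 1}]$-module of order $\Delta_K(t)$, and a half-lives-half-dies argument produces a submodule $P := \Ker\bigl(A \to H_1(\widetilde{E_W};\Q)\bigr)$ satisfying $|P| \doteq f(t)$ and $|A/P| \doteq \overline{f}(t)$, where $\overline{f}(t) := f(t^{-1})$. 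This yields the classical factorization $\Delta_K(t) \doteq f(t)\overline{f}(t)$. That $W$ is only a $\Q$-homology $4$-ball is harmless, since one still has $H_\ast(E_W;\Q) \cong H_\ast(S^1;\Q)$, and the relevant Poincar\'e--Lefschetz duality is between $\Q$-vector spaces.

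The new ingredient is the involution. Because $\rho$ reverses meridians of $K$, a suitable lift $\widetilde{\rho}_\ast\colon A \to A$ satisfies the anti-equivariance relation $\widetilde{\rho}_\ast(t \cdot x) = t^{-1} \cdot \widetilde{\rho}_\ast(x)$. Since $\widetilde{\rho}(D) = D$, the submodule $P$ is preserved by $\widetilde{\rho}_\ast$. The anti-equivariance relation exchanges the order of a submodule computed with $t$ with the one computed with $t^{-1}$, so $\widetilde{\rho}_\ast$-invariance of $P$ forces $f(t) \doteq \overline{f}(t)$ in $\Q[t^{\pm 1}]$. Combined with $\Delta_K(t) \doteq f(t)\overline{f}(t)$, this gives $\Delta_K(t) \doteq f(t)^2$, the claimed squareness.

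I expect the main obstacle to be the last algebraic step: carefully converting the anti-equivariance $\widetilde{\rho}_\ast t = t^{-1} \widetilde{\rho}_\ast$ together with the invariance of $P$ into the identity $f \doteq \overline{f}$, while tracking units in $\Q[t^{\pm 1}]$. A secondary technical point is choosing equivariant basepoints so that $\widetilde{\rho}$ lifts cleanly to the infinite cyclic cover; this was handled in \cite{DP23b} for the integral case and should transfer with only cosmetic changes.
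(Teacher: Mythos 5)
Your overall architecture matches the paper's: both arguments run a Fox--Milnor ``half-lives-half-dies'' factorization through the rational machinery of \cite{CFHH13} and then use the anti-equivariance $\phi\circ\rho_* = -\phi$ (equivalently $\widetilde{\rho}_*\, t = t^{-1}\,\widetilde{\rho}_*$) together with the $\rho$-invariance of the kernel submodule to force $f(t) \doteq f(t^{-1})$. That second, equivariant half of your argument is essentially identical to what the paper does (the paper phrases it on the $0$-surgery $S^3_0(K)$ with twisted coefficients rather than on the knot exterior and its infinite cyclic cover, but this is cosmetic).

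The gap is in the first half, precisely at the sentence claiming that $W$ being only a $\Q$-homology $4$-ball is ``harmless.'' It is not: in a $\Q$-homology ball the meridian of $K$ need not generate $H_1(E_W;\Z)/\mathrm{torsion} \cong \Z$; it maps to $n$ times a generator, where $n \geq 1$ is the \emph{complexity} of the slice disk. Consequently the infinite cyclic cover of $E_W$ restricted to $E_K$ is not the standard infinite cyclic cover of $E_K$ with its standard $t$-action: the preimage of $E_K$ has $n$ components, on each of which the ambient deck generator acts as $t^n$ acts on $\widetilde{E_K}$. So the map $A = H_1(\widetilde{E_K};\Q) \to H_1(\widetilde{E_W};\Q)$ you use to define $P$ is not a map of $\Q[t^{\pm 1}]$-modules as written (it does not exist unless $n=1$); the module whose order one can actually control is $H_1(S^3_0(K), \phi\circ i)$, which by \cite[Proposition~4.6]{CFHH13} has order $\Delta_K(t^n)$, not $\Delta_K(t)$. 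The half-lives-half-dies step (\cite[Proposition~4.5]{CFHH13}) then yields $\Delta_K(t^n) = f(t)f(t^{-1})$, the equivariance yields $f(t) = f(t^{-1})$, and one still needs a final (easy, but necessary) step to pass from the squareness of $\Delta_K(t^n)$ to that of $\Delta_K(t)$. This complexity phenomenon is exactly the new feature of the rational setting and the reason the paper routes the argument through \cite{CFHH13}; your proof as written silently assumes $n = 1$.
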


Note that every Klein amphichiral knot is clearly \emph{strongly positive amphichiral}, i.e., isotopic to its mirror image via an involution. Furthermore, it is well known from the classical result of Hartley and Kawauchi \cite{HK79} that the Alexander polynomial of a strongly positive amphichiral knot is square. Therefore, it is interesting to compare Theorem~\ref{thm:mainthm1} with Theorem~\ref{thm:mainthm2}, see Problem~\ref{prob:+amphichiral}.

\subsection{Applications} 
\label{sec:applications}

Using our main theorems in {\sc\S}\ref{sec:fundamentals}, we further investigate the natural maps between concordance groups. To do so, we introduce the \emph{equivariant $\Q$-concordance group} $\CTQ$ (see {\sc\S}\ref{sec:equivariant-concordance}) by analyzing the equivariant $\Q$-concordance classes of strongly invertible knots.

Recall that two knots in $S^3$ are said to be \emph{concordant} if they cobound a smoothly properly embedded annulus in $S^3 \times [0,1]$. The set of oriented knots modulo concordance forms a countable abelian group, namely the \emph{concordance group} $\C$, under the operation induced by connected sum. The trivial element in $\C$ is formed by the concordance class of the unknot. The knots lying in this concordance class are the so-called \emph{slice knots}, and they bound smoothly properly embedded disks in $B^4$. The concordance group and the notion of sliceness were defined in the seminal work of Fox and Milnor \cite{FM66}. Since then, they have been very central objects of active research in knot theory and low-dimensional topology, see the survey articles \cite{Liv05, Hom17, Sav24}. Cha's monograph \cite{Cha07} systemically elaborated the \emph{$\Q$-concordance} of knots and the \emph{$\Q$-concordance group} $\CQ$. Recently, there has been a great deal of interest in these concepts as well, see \cite{KW18, Mil22, HKPS22, Lev23, Lee24}.

In \cite{Sak86}, Sakuma introduced the \emph{equivariant concordance group} $\CT$ by studying strongly invertible knots under equivariant connected sum. It is again known to be countable, but until the recent work of the first author \cite{DP23} and his joint work with Framba \cite{DPF23}, the structure of $\CT$ was completely mysterious. Unlike $\C$, it turns out that $\CT$ is non-abelian and in fact non-solvable. The comprehensive study of $\CT$ and its invariants have been the subjects of various recent articles \cite{Wat17, BI22, DMS23, HHS23, MP23}.

Considering four concordance groups and their natural maps, we therefore have the following commutative diagram. Since two concordant (resp. equivariant concordant) knots are $\Q$-concordant (resp. equivariant $\Q$-concordant), we have the surjective maps $\psi$ and $\Psi$. Moreover, $\mathfrak{f}$ and $\mathfrak{f}_\Q$ are both \emph{forgetful} maps, forgetting the additional structures.
\iffalse
\[ \psset{arrows=->, arrowinset=0.25, linewidth=0.6pt, nodesep=3pt, labelsep=2pt, rowsep=0.7cm, colsep = 1.1cm, shortput =tablr}
 \everypsbox{\scriptstyle}
 \begin{psmatrix}
 \CT &  \CTQ \\%
 \C & \CQ
 %%%
 \ncline{1,1}{1,2}^{\psi} \ncline{1,1}{2,1} <{\mathfrak{f} }
 \ncline{1,2}{2,2} > {\pi_\Q}
 \ncline{2,1}{2,2}^{\psi}
 \end{psmatrix}\]
 \fi
\[
\begin{tikzcd}
\CT \arrow{r}{\Psi} \arrow[swap]{d}{\mathfrak{f}} & \CTQ \arrow{d}{\mathfrak{f}_\Q} \\%
\C \arrow{r}{\psi}& \CQ
\end{tikzcd}
\]

Using the new characterization in Theorem~\ref{thm:mainthm1}, we show that the kernel of the map $\Psi$ has a rich algebraic structure. Our constructions use certain Klein amphichiral Turk's head knots $J_n = Th (3,n)$, which are defined as the braid closures of the $3$-braids $(\sigma_1 {\sigma_2}^{-1})^n$ (see \cite{DPS24} and {\sc\S}\ref{sec:constructions} for more details). For the obstructions, we rely on the moth polynomials introduced in the recent work of Framba and the first author \cite{DPF23b} and on the application of Milnor invariants to equivariant concordance, as described in \cite{DPF23}.

\begin{introthm}
\label{thm: mainthm3}
There exists a nonabelian subgroup $\mathcal{J}$ of $\mathrm{Ker}(\Psi)$ such that its abelianization $\mathcal{J}^{ab}$ is isomorphic to $\Z^\infty$. Moreover, the image of $\mathcal{J}$ under $\mathfrak{f}$ is a $2$-torsion subgroup of $\C$.
\end{introthm}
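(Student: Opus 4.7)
The plan is to take $\mathcal{J}$ to be the subgroup of $\CT$ generated by the Klein amphichiral Turk's head knots $(J_n,\rho_n)$ with $J_n = Th(3,n)$, equipped with the strong inversion and the commuting negative amphichiral involution produced in \cite{DPS24}. Because each $(J_n,\rho_n,\tau_n)$ is Klein amphichiral, Theorem~\ref{thm:mainthm1} immediately yields $[J_n,\rho_n]\in \mathrm{Ker}(\Psi)$, so $\mathcal{J}\subset \mathrm{Ker}(\Psi)$ by construction. Moreover, any Klein amphichiral knot is negative amphichiral and hence satisfies $[K]=[-\overline{K}]=-[K]$ in $\C$, i.e. is $2$-torsion; consequently $\mathfrak{f}(\mathcal{J})$ is automatically contained in the $2$-torsion subgroup of $\C$. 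These two observations handle the containments in Theorem~\ref{thm: mainthm3} for free.

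The real content lies in the lower bounds on $\mathcal{J}$. To obtain $\mathcal{J}^{ab}\cong \Z^\infty$, I would apply the moth polynomial invariants of Framba and the first author \cite{DPF23b}. Since these are abelian invariants of strongly invertible concordance, they descend to a homomorphism out of $\CT^{ab}$, so it suffices to produce an infinite sub-family of the $(J_n,\rho_n)$ whose moth polynomials are $\Z$-linearly independent. Turk's head knots admit explicit equivariant Seifert surfaces inherited from the braid $(\sigma_1\sigma_2^{-1})^n$, so the computation of the moth polynomial of $(J_n,\rho_n)$ should reduce to a manageable family indexed by $n$. The key analytic step is to isolate a degree or leading-coefficient invariant of this family that grows with $n$, from which $\Z$-linear independence will follow and in turn produce an injection $\Z^\infty \hookrightarrow \mathcal{J}^{ab}$.

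For nonabelianness, I would invoke the equivariant Milnor invariants developed in \cite{DPF23}, which vanish on $\CT^{ab}$ but are designed to detect nontrivial iterated commutators in $\CT$. The plan is to select two generators $(J_m,\rho_m)$ and $(J_n,\rho_n)$, form the commutator $[(J_m,\rho_m),(J_n,\rho_n)] \in \CT$, and compute a Milnor-type invariant on this commutator. Since $\mathrm{Ker}(\Psi)$ is a normal subgroup of $\CT$ containing every $(J_n,\rho_n)$, the commutator already lies in $\mathcal{J}\cap \mathrm{Ker}(\Psi)$, so the nonvanishing of the Milnor invariant simultaneously witnesses that this commutator is nontrivial in $\CT$ and that $\mathcal{J}$ is nonabelian.

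The main obstacle is the explicit computation of moth polynomials and equivariant Milnor invariants for the family $\{(J_n,\rho_n)\}$: one must mine enough information from the $(3,n)$-Turk's head braids to guarantee simultaneously (a) $\Z$-linear independence of infinitely many moth polynomials in $\CT^{ab}$, and (b) at least one nontrivial Milnor invariant on a commutator. Both are concrete but delicate calculations, and the whole scheme depends on being able to carry them out uniformly in $n$. Once these explicit bounds are in hand, Theorem~\ref{thm: mainthm3} follows by combining the abelian and nonabelian lower bounds with the two automatic containments established in the first paragraph.
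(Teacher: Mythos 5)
Your plan coincides with the paper's proof in every structural respect: the paper takes $\mathcal{J}$ to be generated by the Klein amphichiral Turk's head knots $J_p$ for $p\geq 5$ prime, gets the containment in $\mathrm{Ker}(\Psi)$ from Theorem~\ref{thm:mainthm1}, gets $2$-torsion of $\mathfrak{f}(\mathcal{J})$ from negative amphichirality, proves $\mathcal{J}^{ab}\cong\Z^\infty$ via linear independence of moth polynomials, and proves nonabelianness by computing string-link Milnor invariants of the commutator $[J_5,J_7]$. The one place your sketch diverges from what actually makes the argument work is the linear-independence step: the paper does not use degree or leading-coefficient growth, but rather restricts to pairwise coprime indices so that the Conway polynomials $\nabla_{J_p}$ are pairwise coprime, and then shows (via a Gordon--Litherland/spanning-tree determinant inequality, Lemma~\ref{lem:det_int} and Corollary~\ref{cor:eta_determinant}) that each $\eta_m(J_p)$ has a nontrivial denominator dividing $\nabla_{J_p}$, so no integral linear combination can cancel the poles.
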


Using the fundamental obstruction in Theorem~\ref{thm:mainthm2}, we are able to prove another interesting result about equivariant knots. Here, the constructive part follows from Cha's result (see {\sc\S}\ref{sec:obstructions}).

\begin{introthm}
\label{thm: mainthm4}
There exists a subgroup $\mathcal{K}$ of $\Ker (\mathfrak{f}_\Q)$ which surjects onto $(\Z/2\Z)^\infty$.
\end{introthm}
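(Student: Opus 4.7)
The plan is to combine the obstruction of Theorem~\ref{thm:mainthm2} with Cha's $\Q$-sliceness constructions to exhibit $(\Z/2\Z)^\infty$ as a quotient of a subgroup of $\Ker(\mathfrak{f}_\Q)$. Since the Alexander polynomial is multiplicative under equivariant connected sum, and by Theorem~\ref{thm:mainthm2} it is a square on equivariant $\Q$-slice classes, the assignment $[(K,\rho)] \mapsto [\Delta_K(t)]$ descends to a well-defined group homomorphism
\[
\Phi \colon \CTQ \longrightarrow \Q[t,t^{-1}]^\times \big/ \bigl\langle \pm t^n,\; (\Q[t,t^{-1}]^\times)^2 \bigr\rangle,
\]
whose target is a $\Z/2\Z$-vector space of countably infinite dimension, spanned by classes of distinct symmetric irreducible Laurent polynomials.

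The constructive input is Cha's family of $\Q$-slice knots, which realizes an infinite supply of distinct Alexander polynomials. From this family I would extract a sequence $\{K_i\}_{i \in \mathbb{N}}$ of $\Q$-slice knots whose Alexander polynomials $p_i(t)$ are pairwise coprime symmetric irreducible non-squares, so that the classes $[p_i(t)]$ become linearly independent in the target of $\Phi$. The crucial refinement is to perform Cha's construction symmetrically: starting from a strongly invertible seed (for example, a suitable two-bridge knot) and carrying out the required infections or surgeries along axis-symmetric curves, so that each $K_i$ inherits a strong inversion $\rho_i$ without disturbing its $\Q$-sliceness or changing its Alexander polynomial.

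Letting $\mathcal{K} \leq \CTQ$ be the subgroup generated by the classes $[(K_i,\rho_i)]$, we have $\mathcal{K} \subseteq \Ker(\mathfrak{f}_\Q)$ since each $K_i$ is $\Q$-slice. By construction, $\Phi$ sends $[(K_i,\rho_i)]$ to $[p_i(t)]$, and because these images are linearly independent over $\Z/2\Z$, the restriction $\Phi|_{\mathcal{K}}$ surjects onto their $\Z/2\Z$-span $(\Z/2\Z)^\infty$, as required. The main obstacle will be making Cha's construction equivariant enough to produce strongly invertible knots while still realizing sufficiently many distinct Alexander polynomials; once this symmetric realization is in place, the rest of the argument is formal, since the $\Z/2\Z$-independence of the $[p_i(t)]$ follows from the pairwise coprime non-square choice.
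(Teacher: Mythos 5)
Your approach is essentially the paper's, and the mechanism you describe is correct: the paper also takes Cha's family $\{K_n\}_{n\geq 1}$ of $\Q$-slice knots, applies Theorem~\ref{thm:mainthm2} via the observation that $\det(K_n)=\Delta_{K_n}(-1)=4n^2+1$ is never a perfect square, and uses the fact that the quadratic polynomials $\Delta_{K_n}(t)=-n^2t^{-1}+2n^2+1-n^2t$ are pairwise coprime to conclude that $\widetilde{\#}^{a_1}K_{n_1}\widetilde{\#}\cdots\widetilde{\#}^{a_l}K_{n_l}$ has square Alexander polynomial if and only if every $a_i$ is even; your homomorphism $\Phi$ is exactly the map the paper leaves implicit. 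The one place where your write-up falls short is the step you yourself flag as the ``main obstacle'': you propose to re-engineer Cha's construction by equivariant infections along axis-symmetric curves so that each $K_i$ ``inherits'' a strong inversion, and you leave that unresolved. In fact no such work is needed --- Cha's knots $K_n$ are manifestly strongly invertible as drawn (the $\pi$-rotation about the axis of the diagram, which exchanges the $n$- and $(-n)$-twist regions), and they are strongly negative amphichiral, which is precisely how the paper recovers their $\Q$-sliceness from Kawauchi's characterization; one simply fixes any choice of strong inversion and direction on each $K_n$ and takes $\mathcal{K}$ to be the subgroup of $\CTQ$ they span. Two smaller points: requiring the $p_i(t)$ to be \emph{irreducible} is stronger than necessary (pairwise coprime non-squares suffice, by unique factorization in $\Q[t,t^{-1}]$), and for $\Phi$ to descend to $\CTQ$ you should say explicitly that equivariant $\Q$-concordance of $(K_0,\rho_0,h_0)$ and $(K_1,\rho_1,h_1)$ makes $K_0\,\widetilde{\#}\,(\overline{K_1},\rho_1,-h_1)$ equivariantly $\Q$-slice, so that Theorem~\ref{thm:mainthm2} forces $\Delta_{K_0}(t)\Delta_{K_1}(t)$ to be a square up to units. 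With those repairs your argument coincides with the paper's proof.
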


Previously, the other maps in the above diagram have been studied extensively. In \cite{Cha07}, Cha showed that $\mathrm{Ker}(\psi)$ has a $(\Z/2\Z)^\infty $ subgroup, generated by non-slice amphichiral knots, containing the figure-eight knot (see {\sc\S}\ref{sec:obstructions}). Recently, Hom, Kang, Park, and Stoffregen \cite{HKPS22} proved that $(2n-1, 1)$-cables of the figure-eight knot for $n \geq 2$ generate a $\Z^\infty $ subgroup in $\mathrm{Ker}(\psi)$.

On the other hand, Livingston \cite{Liv83} (cf. \cite{Kim23}) proved that the map $\mathfrak{f}$ is not surjective, exhibiting knots which are not concordant to their reverses. This result was later improved in the work of Kim and Livingston \cite{KL22}, by showing the existence of topologically slice knots which are not concordant to their reverses. More recently, Kim \cite{Kim23} showed the existence of knots that are not $\Q$-concordant to their reverses, showing that also the map $\mathfrak{f}_\Q$ is not surjective.

Potential counterexamples to the slice-ribbon conjecture based on certain cables of $\Q$-slice knots were recently eliminated by the work of Dai, Kang, Mallick, Park, and Stoffregen \cite{DKMPS22}. The core example was the $(2,1)$-cable of the figure-eight knot $K = 4_1$, denoted by $K_{2,1}$. Their strategy for obstructing the sliceness of a knot $K$ was to show that its double branched cover bounds no equivariant homology $4$-ball, remembering the data of the branching involution. This is closely related to the doubling construction (see {\sc\S}\ref{sec:constructions}) by means of the Montesinos trick which provides the diffeomorphism $\Sigma_2 ({K}_{2,1}) \cong S^3_{+1} (K \# \overline{K})$. More precisely, this diffeomorphism identifies the branching involution on $\Sigma_2 (K_{2,1})$ with the involution on surgered manifold $S^3_{+1} (K \# \overline{K})$, induced from the strong inversion on $K \# \overline{K}$. More obstructions were recently obtained by Kang, Park, and Taniguchi \cite{KPT24}.

Both $\Q$-slice and strongly invertible knots have been the subject of interesting constructions of $3$- and $4$-manifolds. For example, $\Q$-slice knots were used to exhibit Brieskorn spheres, which bound $\Q$-homology $4$-balls but not $\Z$-homology $4$-balls, see the articles by Akbulut and Larson \cite{AL18} and the second author \cite{Sav20}. Another instance was the work of Dai, Hedden, and Mallick \cite{DHM23}, which used strongly invertible slice knots to produce infinite families of new corks. More recently, Dai, Mallick, and Stofferegen \cite{DMS23} provided a new detection of exotic pairs of smooth disks in $B^4$ by relying on strong inversions of slice knots in $S^3$.

\subsection{Open Problems}

Finally, we would like to list some basic open problems for the future study of the new group $\CTQ$ and the behavior of its elements. The first problem aims to measure the difference between Theorem~\ref{thm:mainthm1} and Theorem~\ref{thm:mainthm2}.

\begin{introprob}
\label{prob:+amphichiral}
Is every equivariant $\Q$-slice knot equivariant concordant to a Klein amphichiral knot?
\end{introprob}

The second problem concerns the structure of $\CTQ$, and we expect affirmative answers to both.

\begin{introprob}
Is $\CTQ$ non-abelian? Is $\CTQ$ non-solvable?
\end{introprob}

The other problem is about the potential complexity of equivariant $\Q$-slice knots. Following the paper of Boyle and Issa \cite{BI22}, recall that given a strongly invertible knot $(K, \rho)$, the \emph{equivariant $4$-genus} $\widetilde{g_4}$ of $K$ is the minimal genus of an orientable, smoothly properly embedded surface $S \subset B^4$ with boundary $K$ for which $\rho$ extends to an involution $\widetilde{\rho}: (B^4, S) \to (B^4, S)$. Previously, using Casson-Gordon invariants, Miller \cite{Mil22} proved that there are $\Q$-slice knots with arbitrarily large $g_4$, namely the classical smooth $4$-genus.

\begin{introprob}
Are there equivariant $\Q$-slice knots with arbitrarily large $\widetilde{g_4}$?
\end{introprob}

Theorem \ref{thm:mainthm2} provides a first obstruction to equivariant $\Q$-sliceness, which can be seen as a first \emph{algebraic concordance} obstruction in this setting. We would like to ask whether it is possible to define other obstructions and invariants, similar to the ones obtained in \cite{Lev69b,Lev69,Cha07,DP23b}. See also Remark~\ref{rem:algebraic concordance}.

\begin{introprob}
\label{prob:algebraic concordance}
Can we define a notion of equivariant algebraic $\Q$-concordance?
\end{introprob}

The knot Floer theoretic invariants have had several important applications to the study of knot concordance, see Hom's surveys \cite{Hom17,Hom23} for more details. The two famous invariants -- $\tau$ and $\epsilon$ -- are also known to be $\Q$-concordance invariants. More recently, Dai, Mallick, and Stoffregen \cite{DMS23} also provided several equivariant concordance invariants using knot Floer homology. As a final problem, we would like to ask:

\begin{introprob}
\label{prob:floer_homology}
Can we define equivariant $\Q$-concordance invariants using knot Floer homology?
\end{introprob}

\subsection*{Organization} In {\sc\S}\ref{sec: equivariant}, we study equivariant $\Q$-concordances in the broad perspective. We review symmetries of knots and introduce the notion of Klein amphichirality in {\sc\S}\ref{sec:symmetry}. Then, in {\sc\S}\ref{sec:equivariant-slice}, we prove Theorem~\ref{thm:mainthm1}. Next, we introduce the equivariant $\Q$-concordance group in {\sc\S}\ref{sec:equivariant-concordance}. In {\sc\S}\ref{sec:constructions}, we construct equivariant $\Q$-slice knots by using Klein amphichiral Turk's head knots. Finally, we prove Theorem~\ref{thm:mainthm2} in {\sc\S}\ref{sec:obstructions}, and give examples of non-equivariant $\Q$-slice knots. We close the section by proving Theorem~\ref{thm: mainthm4}. In {\sc\S}\ref{sec:independence}, we particularly work on the obstructions. After discussing preliminary notions such as weighted graphs, Gordon-Litherland forms, and moth polynomials, we show independence of certain equivariant $\Q$-slice knots in $\CT$. We finally prove Theorem~\ref{thm: mainthm3}. 

\subsection*{Acknowledgements} Our project has started during the events Winter Braids XIII in Montpellier and Between the Waves 4 in Besse, so we would like to thank the organizers of both meetings. We are grateful to Leonardo Ferrari for the useful discussions on hyperbolic knot symmetries.

\section{Equivariant Q-Sliceness and Q-Concordance}
\label{sec: equivariant}

\subsection{Symmetries of Knots and Klein Amphichirality}
\label{sec:symmetry}
Following Kawauchi's book \cite[{\sc\S}10]{Kaw90}, we consider the following important symmetries of knots. Furthermore, we use the resolution of the Smith conjecture \cite{W69, MB84} to identify the fixed point set of a given involution, denoted by $\mathrm{Fix} (\cdot)$. We also set the notation $\operatorname{Diff}(\cdot)$ (resp. $\operatorname{Diff}^+(\cdot)$) for the group of diffeomorphisms (resp. orientation-preserving diffeomorphisms) of a manifold, and $\operatorname{Diff}^-(\cdot) \doteq \operatorname{Diff}(\cdot) \setminus \operatorname{Diff}^+(\cdot)$, i.e., the set of orientation-reversing diffeomorphisms of a manifold. 

\begin{defn}
A knot $K$ in $S^3$ is said to be:
\begin{itemize}[leftmargin=2em]
    \item \emph{invertible} if there is a map $\rho \in \operatorname{Diff}^+(S^3)$ such that $\rho (K) = -K$. If $\rho$ is further an involution, then $(K, \rho)$ is called \emph{strongly invertible}. In this case, we have $\mathrm{Fix}(\rho) = S^1$ and $\mathrm{Fix}(\rho) \cap K = S^0$. Moreoever, the knots $(K, \rho)$ and $(K', \rho')$ are called \emph{equivalent} if there is a map $f \in \operatorname{Diff}^+(S^3) $ such that $f(K) = K'$ and $f \circ \rho \circ f^{-1} = \rho^{-1}.$

    \item \emph{negative amphichiral} if there is a map $\tau \in \operatorname{Diff}^-(S^3)$ such that $\tau (K) = -K$. If $\tau$ is further an involution, $(K, \tau)$ is called \emph{strongly negative amphichiral}. In this case, we have either $\mathrm{Fix}(\tau) = S^0$ or $\mathrm{Fix}(\tau)=S^2$. 
    
    \item \emph{positive amphichiral} if there is a map $\delta \in \operatorname{Diff}^-(S^3)$ such that $\delta(K) = K$. If $\delta$ is further an involution, then $(K, \delta)$ is called \emph{strongly positive amphichiral}. In this case, we have $\mathrm{Fix}(\tau) = S^0$. 

    \item \emph{$n$-periodic} if there is a map $\theta \in \operatorname{Diff}^+(S^3)$ such that $\theta (K) = K$, $\Fix(\theta)\cap K=\emptyset$ and $\theta$ is period of $n$, i.e., $n$ is the minimal number so that $\theta^n = \mathrm{id} \in \operatorname{Diff}^+(S^3) $. If $\mathrm{Fix}(\theta) = S^1$ (resp. $\Fix(\theta)=\emptyset$), then we say that $(K,\theta)$ is \emph{cyclically periodic} (resp. \emph{freely periodic}).
\end{itemize}
\end{defn}

\begin{figure}[htbp]
\centering
\includegraphics[width=0.6\columnwidth]{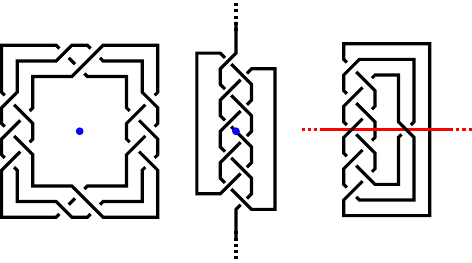}
\caption{From left to right: strongly positive amphichiral, strongly negative amphichiral and strongly invertible symmetries on $10_{123}$. In the first two cases, the involution is given by the $\pi$-rotation around the blue dot composed with the reflection along the plane of the diagram. The third symmetry is given by the $\pi$-rotation around the red axis.}
\label{fig:Th(3,5)}
\end{figure} 

In order to compare the symmetries of knots in our new context, we introduce the following crucial notion.

\begin{defn}
\label{defn:klein}
A knot $K$ in $S^3$ is said to be \emph{Klein amphichiral} if there exist two involutions $\rho,\tau: S^3 \to S^3$ such that
\begin{itemize}[leftmargin=2em]
    \item $(K,\rho)$ is strongly invertible,
    \item $(K,\tau)$ is strongly negative amphichiral,
    \item $\rho \circ \tau = \tau \circ \rho$.
\end{itemize}
\end{defn}

The \emph{symmetry group} of a knot $K$ in $S^3$, which is denoted by $\mathrm{Sym} (S^3, K)$, is defined to be the mapping class group of the knot exterior $S^3 \setminus \nu(K)$, see \cite[{\sc\S}10.6]{Kaw90}. Denote by $\mathrm{Sym}^+(S^3,K)$ the subgroup consisting of orientation-preserving maps. Observe that since the maps $\tau$ and $\rho$ of a Klein amphichiral knot commute, they together generate the \emph{Klein four group} $$ V = \Z/2\Z \times \Z/2\Z ,$$  hence the composition map $\tau \circ \rho$ is a strongly positive amphichiral involution for $K$. See Figure~\ref{fig:10_123_sym} for an example of a Klein amphichiral knot.

\begin{figure}[ht]
    \centering
    \includegraphics[width=0.7\linewidth]{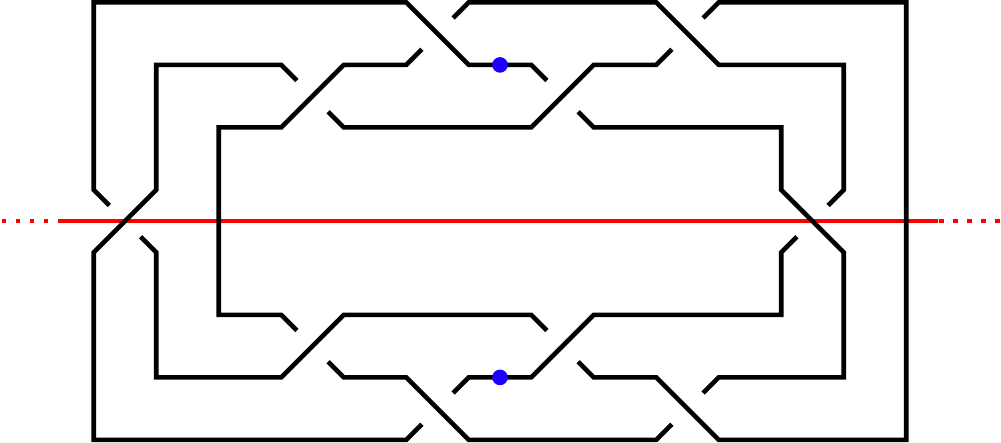}
    \caption{Klein amphichiral symmetry on $10_{123}$: $\rho$ is given by a $\pi$-rotation around the red axis, while $\tau$ is the point reflection around the two blue points.}
    \label{fig:10_123_sym}
\end{figure}

Notice that there exist two distinct types of Klein amphichiral symmetries, distinguished by the fixed point set of $\tau$: \begin{enumerate}
    \item $\Fix(\tau)\cong S^2$, which forces $K=J\widetilde{\#}J^{-1}$ for some DSI knot $J$,
    \item $\Fix(\tau)\cong S^0$.
\end{enumerate}

In the first case, $K$ is clearly equivariant slice, which we will regard as the trivial case. Therefore, we will always assume that the symmetry falls in the second case.

\begin{rem}
    The recent work of Boyle, Rouse, and Williams \cite{BRW23} provided a classification result for symmetries of knots in $S^3$. In their terminology, a Klein amphichiral knot corresponds to a $D_2$-symmetric knot of type SNASI-(1).
\end{rem}

A knot $K$ is called \emph{hyperbolic} if the knot complement $S^3 \setminus \nu(K)$ admits a complete metric of constant curvature $-1$ and finite volume. Now, we recall the following proposition relating the symmetries of hyperbolic knots with their symmetry groups.

\begin{prop}
\label{prop:equivariant}
Let $K$ be a hyperbolic, invertible, and negative amphichiral knot in $S^3$. Suppose that $\mathrm{Sym} (S^3, K) = D_{2m}$ with $m$ odd, where $D_n$ is the dihedral group with $2n$ elements. Then $K$ admits a unique strongly invertible involution $\rho$ (up to \emph{equivalence}, see {\sc\S}\ref{sec:constructions}) and a strongly negative amphichiral involution $\tau$ such that $(K,\rho,\tau)$ is Klein amphichiral.
\end{prop}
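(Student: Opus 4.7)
The plan is to exploit Mostow rigidity: since $K$ is hyperbolic, every element of $\mathrm{Sym}(S^3,K)\cong D_{2m}$ can be realized as a smooth self-diffeomorphism of the pair $(S^3,K)$, so one may read off $\rho$ and $\tau$ directly from the abstract group $D_{2m}=\langle r,s\mid r^{2m}=s^2=1,\, srs=r^{-1}\rangle$. The key tool is the orientation character
\[
\epsilon:D_{2m}\longrightarrow \Z/2\Z\times\Z/2\Z,
\]
whose two coordinates record the action on the orientations of $S^3$ and of $K$. Invertibility of $K$ puts $(0,1)$ in the image, negative amphichirality puts $(1,1)$, and together these generate $\Z/2\Z\times\Z/2\Z$, so $\epsilon$ is surjective and $|\ker\epsilon|=m$.

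The parity hypothesis on $m$ now controls everything. Any subgroup of $D_{2m}$ of odd order must be cyclic and contained in the rotation subgroup $\langle r\rangle$, forcing $\ker\epsilon=\langle r^2\rangle$. The coset of strongly invertible type $(0,1)$ is therefore $r\langle r^2\rangle=\{r^k:k\text{ odd}\}$, inside which $r^k$ has order $2m/\gcd(k,m)$; this equals $2$ only when $k=m$. Hence $\rho:=r^m$ is forced to be the unique involution of strongly invertible type in $\mathrm{Sym}(S^3,K)$, which yields the uniqueness statement up to equivalence. Crucially, $r^m$ is the nontrivial central element of $D_{2m}$, a fact I use to guarantee commutation with $\tau$.

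It remains to verify that $\rho$ is actually strongly invertible in the geometric sense and to exhibit a commuting $\tau$. The Smith conjecture restricts $\mathrm{Fix}(\rho)$ to be either empty or a circle; but $\rho|_K$ is an orientation-reversing involution of $S^1\cong K$, which necessarily has two fixed points, so $\mathrm{Fix}(\rho)$ is nonempty and therefore equal to $S^1$, with $\mathrm{Fix}(\rho)\cap K=S^0$. For $\tau$, centrality of $\rho$ means every reflection $sr^i$ commutes with $\rho$, and the identity $\epsilon(sr^i)=\epsilon(s)+(0,i\bmod 2)$ shows that exactly $m$ of the $2m$ reflections sit in the negative amphichiral coset $(1,1)$. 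Any such choice of $\tau$ renders $(K,\rho,\tau)$ Klein amphichiral.

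The main technical nuance I anticipate is precisely the Smith-conjecture step that rules out a free strongly-invertible-type involution, together with the care required to pass from abstract elements of $\mathrm{Sym}(S^3,K)$ to genuine involutions of $(S^3,K)$ and to translate uniqueness in the group into uniqueness up to the equivalence relation of {\sc\S}\ref{sec:symmetry}; the rest is a clean enumeration in the subgroup lattice of $D_{2m}$, made possible exactly by the odd-parity hypothesis on $m$, which pushes the central rotation $r^m$ out of $\ker\epsilon$ and lets it realize the strong inversion.
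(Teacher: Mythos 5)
Your reduction to the orientation character $\epsilon:D_{2m}\to\Z/2\Z\times\Z/2\Z$ and the observation that $\ker\epsilon$, having odd order $m$, must equal $\langle r^2\rangle$ are both fine. The proof breaks at the next step: you assert without justification that the coset of invertible type $(0,1)$ is $r\langle r^2\rangle$, i.e.\ that $\epsilon(r)=(0,1)$, so that the orientation-preserving subgroup $\mathrm{Sym}^+(S^3,K)$ is the cyclic subgroup $\langle r\rangle$. Nothing in the hypotheses pins down which of the three index-two subgroups of $D_{2m}$ ($\langle r\rangle$, $\langle r^2,s\rangle$, $\langle r^2,sr\rangle$) is $\mathrm{Sym}^+$, and for the knots this proposition is actually applied to in the paper — the Turk's head knots $J_n$ with $n$ odd, e.g.\ $10_{123}$ — the correct answer is the dihedral one: the period-$n$ symmetry together with the strong inversion generate $\langle r^2,s\rangle\cong D_m$ inside $\mathrm{Sym}^+$, the central element $r^m$ is the strongly \emph{positive} amphichiral involution (orientation-reversing on $S^3$, preserving $K$), and the strong inversion is a reflection $sr^{2i}$. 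So your conclusion that $\rho=r^m$ is forced is false in exactly the cases of interest, and with it collapses the centrality argument that was supposed to deliver $\rho\tau=\tau\rho$ — which is the entire content of the Klein amphichirality claim. The uniqueness statement is also affected: the invertible-type coset is generically a reflection coset containing $m$ involutions, and uniqueness up to equivalence comes from these all being conjugate by orientation-preserving elements (all reflections of $D_m$, $m$ odd, are conjugate), not from the coset containing a single involution.

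The paper's proof avoids this trap by working with conjugacy classes rather than cosets: the involutions of $D_{2m}$ with $m$ odd fall into the three classes $\{t^m\}$, $\{st^{2i}\}$, $\{st^{2i+1}\}$; since $\rho$ is orientation-preserving and $\tau$ orientation-reversing they lie in distinct classes; if one of them happens to be the central $t^m$ commutation is automatic, and otherwise both are reflections in the two different reflection classes, and one replaces them by the conjugate representatives $\tau=st$ and $\rho=st^{2i}$ with $2i\equiv 1\pmod m$ (solvable because $m$ is odd), which satisfy $\rho\tau=\tau\rho=t^m$. If you want to rescue your approach you would need to carry out this case analysis rather than assume $\mathrm{Sym}^+=\langle r\rangle$; the odd-parity of $m$ is used not to push $r^m$ into the invertible coset but to make the two reflection classes commensurable via the congruence $2i\equiv 1\pmod m$.
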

\begin{proof}

Let $K$ be a hyperbolic, invertible and negative amphcheiral knot in $S^3$. Since $K$ is a hyperbolic knot, by the work of Kawauchi \cite[Lemma~1]{Kaw79}, we have a pair of strongly negative amphichiral and strongly invertible involutions $\tau, \rho \in \mathrm{Sym} (S^3, K)$ for the knot $K$. As $K$ is both strongly negative amphichiral and strongly invertible, we know that $\mathrm{Sym} (S^3, K) = D_{2m}$ for some $m$, due to the result of Kodama and Sakuma \cite[Lemma~1.1]{KS92}.

Now, assume that $\mathrm{Sym} (S^3, K) = D_{2m}=\langle s,t\;|\;t^{2m}=s^2=1, sts=t^{-1}\rangle$ with $m$ odd. It is a well-known fact that the involutions of $D_{2m}$ split into three conjugacy classes, namely $\{t^m\}$, $\{st^{2i}\}_{0\leq i< m}$ and $\{st^{2i+1}\}_{0\leq i<m}$. Since $\tau$ and $\rho$ are, respectively, orientation reversing and preserving, they are not conjugate. If one of them corresponds to $t^m$, which is central, then they commute. Otherwise, they lie respectively in $\{st^{2i}\}_{0\leq i\leq m}$ and $\{st^{2i+1}\}_{0\leq i\leq m}$ (or vice versa). By changing $\rho$ and $\tau$ with some conjugates, we can suppose $\tau=st$ and $\rho=st^{2i}$ so that $2i\equiv 1\mod{m}$ (which exists since $m$ is odd). It is now easy to check that $\rho \circ \tau = \tau \circ \rho = t^m$, where $t^m$ is a strongly positive amphichiral involution. Therefore, $(K,\rho,\tau)$ is Klein amphichiral.  
\end{proof}

Now we fix a standard model for the Klein amphichiral symmetry. To do so, we can think of $S^3=\{(z,w)\in\mathbb{C}^2\;|\;|z|^2+|w|^2=1\}$. Now consider $\rho$ and $\tau$ to be the following involutions
\begin{align*}
    \rho: S^3 \to S^3, \quad (z,w)\mapsto(-z,w), \\
    \tau:S^3 \to S^3, \quad (z,w)\mapsto(\overline{z},-w).
\end{align*} so that $$ (\rho \circ \tau) (z,w) = (-\overline{z}, -w) = ( \tau \circ \rho) (z,w) .$$

According to \cite{BRW23}, up to conjugation in $\operatorname{Diff}^+(S^3)$, we can always suppose that the Klein amphichiral symmetry is given by the action of $\rho$ and $\tau$ above. Then the fixed point sets of these involutions are given by
\begin{align*}
    \Fix(\rho)&=\{(0,w)\;|\;w\in S^1\},\\
    \Fix(\tau)&=\{(\pm 1,0)\},\\
    \Fix(\rho \circ \tau)&=\{(\pm i,0)\}.
\end{align*}

Let $N_{\rho}$, $N_\tau$, and $N_{\rho \circ \tau}$ be small equivariant tubular neighbourhoods of $\Fix(\rho)$, $\Fix(\tau)$, and $\Fix(\rho\circ \tau)$, respectively. Then we have:
\begin{itemize}[leftmargin=2em]
    \item $N_\rho\cong D^2\times S^1$ and $\rho_{|N_\rho}=(-\id_{D^2},\id_{S^1})$,
    \item $N_\rho\cap K=\{(t\cdot z_i,w_i)\;|\;t\in[-1,1]\}$ for some $z_0,z_1,w_0,w_1\in S^1$,
    \item $N_\tau\cong B^3_1\sqcup B^3_{-1}$, where $B^3_{\pm 1}$ is a small ball centered at $\pm 1$ and $\tau_{|B^3_{\pm 1}}=-\id$,
    \item $B_{\pm 1}^3\cap K=\{t\cdot p\;|\;t\in [-1,1]\}$ for some $p\in S^2$,
    \item $N_{\rho \circ \tau}\cap K=\emptyset$.
\end{itemize}

Let $Y=S^3\setminus \operatorname{int}(N_\rho\cup N_\tau\cup N_{\rho \circ \tau})$. Observe that $Y$ is simply a solid torus with four small balls removed and that $K\cap Y$ consists of four arcs: two of the arcs connect $\partial N_\rho$ to $\partial B^3_1$ and the other two connect $\partial N_\rho$ to $\partial B^3_{-1}$, see Figure~\ref{fig:ext_fix}.

\begin{figure}[ht]
    \centering
    \includegraphics[width=0.4\linewidth]{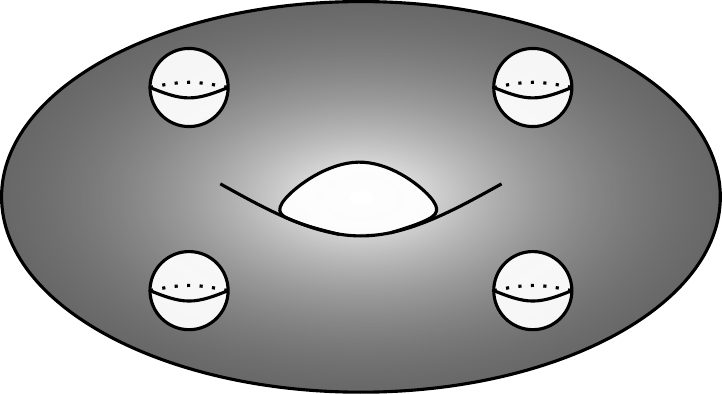}
    \caption{The manifold $Y$, given by removing the fixed point sets.}
    \label{fig:ext_fix}
\end{figure}

Define $\overline{Y}=Y/V$ to be the quotient of $Y$ by the action of $V$. It is not difficult to see that $\overline{Y}$ is a non-orientable $3$-manifold with boundary, whose boundary components are given as follows:In particular, one can see that $\overline{Y}\cong (\mathbb{RP}^2\times I)\natural(\mathbb{RP}^2\times I)$. 
\begin{itemize}[leftmargin=2em]
    \item a Klein bottle, which is the image of the toric boundary component of $Y$, denoted by $A$,
    \item two $\mathbb{RP}^2$'s, which are the images of $\partial N_\tau$ and $\partial N_{\rho \circ \tau}$, respectively. Denote the image of $N_\tau$ by $B$.
\end{itemize} 
In particular, one can see that $\overline{Y}\cong (\mathbb{RP}^2\times I)\natural(\mathbb{RP}^2\times I)$.
Now, denote the image of $K\cap Y$ in $\overline{Y}$ by $\overline{K}$. Then $\overline{K}$ is a simple properly embedded arc in $\overline{Y}$ starting at $A$ and ending in $B$, see Figure~\ref{fig:ext_fix_quo}.

\begin{figure}[ht]
    \centering
    \includegraphics[width=0.3\linewidth]{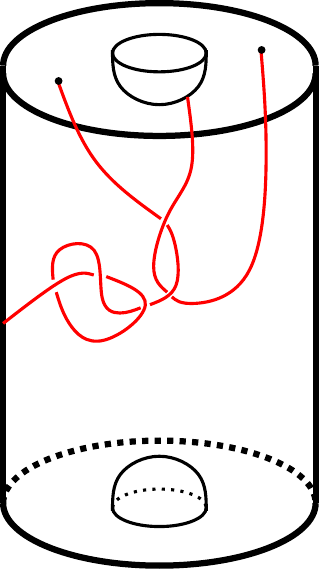}
    \caption{The quotient manifold $\overline{Y}$. Both the upper and lower punctured disks are glued to themselves by $-\id$. In red, there is an example of an arc $\overline{K}$ going from the Klein bottle boundary component to one $\mathbb{RP}^2$ boundary component.}
    \label{fig:ext_fix_quo}
\end{figure}

Define now $$\pi_1(\overline{Y},A,B) = \{ \gamma:[0,1]\to\overline{Y} \ \vert \ \gamma(0)\in A, \ \gamma(1)\in B \} / \text{homotopy}.$$ Notice that every class in $\pi_1(\overline{Y},A,B)$ can be represented by a simple properly embedded arc.
Observe that any homotopy between two properly embedded arcs in $\overline{Y}$ connecting $A$ to $B$ is lifted to an equivariant homotopy in $Y$. This can be closed up to an equivariant homotopy between the corresponding Klein amphichiral knots in $S^3$ in the following way.
Let $\gamma_t$ be the lift of the homotopy at time $t$. Then $\gamma_t$ is given by four proper arcs in $Y$ invariant under the action of $V$. We extend the homotopy over $N_\rho$ and $N_\tau$ following the local models described above. Observe that $\gamma_t\cap\partial N_\rho$ is given by $4$ points: $(z_0,w_0), (-z_0,w_0), (z_1,w_1), (-z_1,w_1)\in S^1\times S^1$. We connect then the components of $\gamma_t$ by adding the arcs $(s\cdot z_0,w_0)$ and $(s\cdot z_1,w_1)$, $s\in [-1,1]$ inside $N_\rho$.

Similarly, consider the intersection $\gamma_t\cap \partial N_\tau$, which is given by four points $p,-p\in \partial B_{+1}^3$ and $q,-q\in\partial B_{-1}^3$. Then the components of $\gamma_t$ are connected by adding the arcs $\{s\cdot p\}_{s\in [-1,1]}\subset B_{+1}^3$ and $\{s\cdot q\}_{s\in [-1,1]}\subset B_{-1}^3$. In this way, we obtain an equivariant homotopy of Klein amphichiral knots in $S^3$.

We can sum up the discussion above in the following lemma:
\begin{lem}
The natural map
$$
\{\text{Klein amphichiral knots}\}/\text{equivariant homotopy}\longrightarrow\pi_1(\overline{Y},A,B)$$
is a bijection.
\end{lem}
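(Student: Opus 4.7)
The plan is to define an explicit inverse $\Psi$ to the natural map $\Phi\colon (K,\rho,\tau)\mapsto [\overline{K}]$. Given a Klein amphichiral knot $(K,\rho,\tau)$ with symmetry in the standard form of the preceding paragraph, $\Phi$ sends it to the homotopy class of the quotient arc $\overline{K}=(K\cap Y)/V$ in $\pi_1(\overline{Y},A,B)$. The map $\Psi$ sends a class $[\gamma]\in \pi_1(\overline{Y},A,B)$ to the equivariant-homotopy class of the Klein amphichiral knot obtained by lifting a simple properly embedded representative of $\gamma$ to four disjoint proper arcs in $Y$ and closing them up inside $N_\rho$ and $N_\tau$ by the standard radial arcs described in the paragraph above the lemma.

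To show that $\Phi$ is well-defined, I would first perturb $K$ by a small equivariant isotopy so that it meets $\partial N_\rho$, $\partial N_\tau$, and $\partial N_{\rho \circ \tau}$ transversally. Because $\Fix(\rho)\cap K=S^0$, $\Fix(\tau)\subset K$ (as $\Fix(\tau)\cong S^0$ forces its two points onto $K$), and $\Fix(\rho \circ \tau)\cap K=\emptyset$, the intersection $K\cap Y$ is a disjoint union of four proper arcs. The $V$-action on $K$ is free off these four marked points, and a short check (using that $\rho$ and $\tau$ restrict to orientation-reversing involutions of $K$ with disjoint fixed-point pairs, while $\rho \circ \tau$ acts by the antipodal rotation) shows that $V$ permutes the four arcs simply transitively. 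Hence $\overline{K}\subset\overline{Y}$ is a single simple properly embedded arc with one endpoint on $A$ and one endpoint on $B$, never on the other $\mathbb{RP}^2$ boundary component, since $K\cap N_{\rho \circ \tau}=\emptyset$. Any $V$-equivariant ambient isotopy between Klein amphichiral knots can be made transverse to these boundaries and descends to a proper homotopy of arcs in $\overline{Y}$.

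For $\Psi$, the discussion in the paragraphs preceding the lemma already carries out the construction and shows that a homotopy between two arc representatives in $\overline{Y}$ lifts, after the prescribed standard extensions over $N_\rho$ and $N_\tau$, to an equivariant homotopy of Klein amphichiral knots in $S^3$. The identity $\Phi\circ\Psi=\id$ is then immediate from construction, since the quotient arc of a $\Psi$-constructed knot is by design the arc $\gamma$ we started with. For $\Psi\circ\Phi=\id$, it suffices to show that every Klein amphichiral knot is equivariantly isotopic to one whose intersection with $N_\rho$ and $N_\tau$ is exactly the standard radial form used by $\Psi$.

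I expect this last step to be the main obstacle: it amounts to a local equivariant straightening inside $N_\rho\cong D^2\times S^1$ (where $\rho$ acts by $(-\id_{D^2},\id_{S^1})$) and inside each component of $N_\tau\cong B^3$ (where $\tau$ acts by $-\id$). The claim is that any $\rho$-invariant proper arc in $N_\rho$ crossing the core circle once with prescribed endpoints on $\partial N_\rho$ is equivariantly isotopic rel boundary to a standard arc $\{(s z_i,w_i):s\in[-1,1]\}$, and similarly any $\tau$-invariant arc through the origin of $B^3$ is equivariantly isotopic rel boundary to the straight diameter. Both statements follow from an equivariant Alexander trick on the pairs $(D^2\times S^1,\text{core})$ and $(B^3,0)$, using the linearity of $\rho$ and $\tau$ in the standard coordinates. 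Once this normal form is in place, $\Phi$ and $\Psi$ are visibly mutual inverses on equivariant-homotopy classes, giving the claimed bijection.
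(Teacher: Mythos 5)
Your proposal is correct and follows essentially the same route as the paper, which proves this lemma by the discussion immediately preceding it: quotienting a Klein amphichiral knot to a simple properly embedded arc from $A$ to $B$, and conversely lifting arcs and their homotopies to $Y$ and closing them up over $N_\rho$ and $N_\tau$ via the standard local models. Your additional details (the simply transitive $V$-action on the four arcs, and the equivariant straightening near the fixed-point sets needed for $\Psi\circ\Phi=\id$) are exactly the points the paper leaves implicit, and they are handled correctly.
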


In order to prove the refinement of Levine's theorem given in Theorem \ref{thm:mainthm1}, the essential ingredient is the following lemma.

\begin{lem}\label{lem:equiv_crossing}
    Every Klein amphichiral knot can be turned into the standard Klein amphichiral unknot by a finite number of $V$-equivariant crossing changes.
\end{lem}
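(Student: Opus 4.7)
The plan is to pass to the quotient picture from the preceding lemma, which identifies equivariant homotopy classes of Klein amphichiral knots with $\pi_1(\overline{Y},A,B)$, and to show that $V$-equivariant crossing changes realize enough moves to send any such class to that of the standard arc $\overline{K_0}$ corresponding to the standard Klein amphichiral unknot $K_0$.

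The $V$-equivariant crossing changes fall into two types according to the size of the $V$-orbit of the crossing. The generic type---those with orbit of size four, located off every fixed-point set---descends to an ordinary crossing change of the quotient arc $\overline{K}$ with itself in $\overline{Y}$. These preserve the homotopy class $[\overline{K}]\in\pi_1(\overline{Y},A,B)$, but by the classical argument for arcs in a $3$-manifold they suffice to replace $\overline{K}$ by any embedded representative of its class.

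The other type---those with orbit of size two, occurring inside the tubular neighborhoods $N_\rho$, $N_\tau$, $N_{\rho\tau}$---is what allows us to change the homotopy class. I would focus on crossing changes sitting on $\operatorname{Fix}(\rho)$: a local model computation near $\operatorname{Fix}(\rho)\cong S^1$ should show that such a move modifies how $\overline{K}$ attaches to the Klein bottle boundary $A$, and in the groupoid $\pi_1(\overline{Y},A,B)$ implements concatenation with a chosen loop in $\pi_1(A)$. The key point is that the inclusion-induced map $\pi_1(A)\to\pi_1(\overline{Y})$ is surjective: via the $\mathbb{RP}^2\#\mathbb{RP}^2$ presentation $\pi_1(A)=\langle x,y\mid x^2y^2\rangle$ of the Klein bottle group, the generators $x$ and $y$ map to the two generators of $\pi_1(\overline{Y})\cong\mathbb{Z}/2\ast\mathbb{Z}/2$ coming from the two $\mathbb{RP}^2$ factors of $\overline{Y}\cong(\mathbb{RP}^2\times I)\natural(\mathbb{RP}^2\times I)$. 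Hence the action of $\pi_1(A)$ is transitive on $\pi_1(\overline{Y},A,B)$.

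Combining the two families, I can first apply size-two crossings at $\operatorname{Fix}(\rho)$ to push $[\overline{K}]$ into the homotopy class of $\overline{K_0}$, and then apply generic crossings to reach $K_0$ exactly within that class. The main obstacle is the local model computation in the previous paragraph: one must verify carefully that a $V$-equivariant crossing change inside $N_\rho$ does realize the expected concatenation action of $\pi_1(A)$ on the arc groupoid, by analyzing the explicit $V$-equivariant tubular structure near $\operatorname{Fix}(\rho)$ and matching it with a chosen set of generators.
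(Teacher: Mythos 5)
Your quotient setup, the computation $\pi_1(\overline{Y})\cong\Z/2\Z\ast\Z/2\Z$, and the surjectivity of $\pi_1(A)\to\pi_1(\overline{Y})$ are all correct and in fact slightly sharpen the corresponding facts in the paper, which only uses that $\pi_1(\overline{Y})$ is generated by the images of $\pi_1(A)$ and $\pi_1(B)$. The gap is exactly the step you flag as the main obstacle: the claim that a $V$-equivariant crossing change supported on $\Fix(\rho)$ realizes concatenation of $\overline{K}$ with an \emph{arbitrary} prescribed loop in $\pi_1(A)$. This is never verified, and it is not a routine local computation: such a crossing change is a very specific degeneration (the two strands of $K\cap N_\rho$ meeting at a point of the core circle, the ``$w_0=w_1$'' phenomenon described before the lemma), and it is far from clear that iterating it produces both generators $x,y$ of the Klein bottle group $\pi_1(A)=\langle x,y\mid x^2y^2\rangle$ rather than only loops supported near $\Fix(\rho)$. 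There is also a conceptual slip that muddies the logic: you speak of size-two crossing changes ``changing the homotopy class in $\pi_1(\overline{Y},A,B)$'', but that set has free endpoints on $A$ and $B$ and---as your own transitivity claim shows---consists of a single element, so nothing changes the class there; the classes you need to move between are the based ones in $\pi_1(\overline{Y},p,q)$, and the realization claim would have to be formulated and proved at that level.

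The realization step is in fact unnecessary, and the paper avoids it by running the implication in the opposite direction. Since $\pi_1(\overline{Y},A,B)=\{\ast\}$, any arc $\overline{K}$ is homotopic \emph{with free endpoints} to the standard arc; a generic such homotopy is an isotopy of embedded arcs except at finitely many times where a transverse double point occurs, and lifting and closing it up equivariantly (as in the discussion preceding the lemma) turns each singular time---including the extra double points that can appear inside $N_\rho$ or $N_\tau$ during the closing-up procedure---into a $V$-equivariant crossing change. One therefore never needs to know which crossing change realizes which element of $\pi_1(A)$; one only needs that every generic equivariant homotopy decomposes into equivariant crossing changes. Note also that this shows your dichotomy is not clean: size-two crossing changes can arise even from an ambient isotopy of the quotient arc, so they are not ``the moves that change the class.'' Either replace your two-step plan by this single observation, or carry out the local model computation in full together with a proof that the loops so realized generate enough of $\pi_1(\overline{Y})$ to act transitively on $\pi_1(\overline{Y},p,q)$.
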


\begin{proof}
Let $K_0$ and $K_1$ be two Klein amphichiral knots, and let $\overline{K_0}$ and $\overline{K_1}$ be the corresponding arcs in $\overline{Y}$. Suppose that there exists a homotopy $\overline{K}_t$, $t\in[0,1]$ between $\overline{K_0}$ and $\overline{K_1}$. Then up to a small perturbation, we can suppose that $\overline{K_t}$ is a simple properly embedded arc connecting $A$ to $B$ for all $t\in[0,1]$ except for finitely many times, for which $\overline{K_t}$ has a point of double transverse self-intersection.

Let $K_t$, $t\in[0,1]$ be the corresponding homotopy between $K_0$ and $K_1$. Then for all $t$, we have that $K_t$ is a Klein amphichiral knot, except at finitely many times, for which it has some points of double transverse self-intersection. Such double points either arise from the double points of $\overline{K_t}$ or they might come from the closing up procedure of the homotopy described above. Using the notation in the discussion above, for example, it might happen that $w_0=w_1$, leading to a new double point inside $N_\rho$ during the homotopy.

Thus, it suffices to show that $\pi_1(\overline{Y},A,B)$ consists of a single point. For this purpose, pick two points $p\in A$ and $q\in B$, and consider the set
$$
\pi_1(\overline{Y},p,q)=\{\gamma:[0,1]\to\overline{Y}\;|\;\gamma(0)=p, \ \gamma(1)=q\}/\text{homotopy}.
$$
Notice that since $A$ and $B$ are connected, the natural map
$$
\iota: \pi_1(\overline{Y},p,q)\to\pi_1(\overline{Y},A,B)
$$
induced by the inclusion is surjective.
Observe then that 
\begin{itemize}[leftmargin=2em]
    \item both $\pi_1(A,p)$ and $\pi_1(B,q)$ act on $\pi_1(\overline{Y},p,q)$ by pre-composition and post-composition, respectively,
    \item the image of a class under $\iota$ is not changed by the action of $\pi_1(A,p)$ and $\pi_1(B,q)$.
\end{itemize}

\noindent This reduces our argument to show that the combined action of $\pi_1(A,p)$ and $\pi_1(B,q)$ is transitive on $\pi_1(\overline{Y},p,q)$, which is a consequence of the following two facts: 
\begin{itemize}[leftmargin=2em]
    \item $\pi_1(\overline{Y},p)$ acts transitively on $\pi_1(\overline{Y},p,q)$ by pre-composition,
    \item $\pi_1(\overline{Y})=\langle\pi_1(A),\pi_1(B)\rangle$.
\end{itemize}

\noindent Therefore, $\pi_1(\overline{Y},A,B)=\{*\}$, as we desired.
\end{proof}

\subsection{The Equivariant Q-Slice Knots}
\label{sec:equivariant-slice}

An \emph{equivariant $\Q$-slice slice knot} is a strongly invertible knot $(K, \rho_K )$ in $S^3$ that bounds a disk $D$ smoothly properly embedded in a $\Q$-homology $4$-ball (a $4$-manifold having the $\Q$-homology of $B^4$) with an orientation preserving involution $\rho : W \to W$ such that $$ \partial D = K, \ \ \  \rho_{\vert_{\partial W = S^3}} = \rho_K, \ \ \  \text{and} \ \ \  \rho (D) = D ,$$ cf. the conditions (\ref{defn:p3}) and (\ref{defn:p4}) below.

Recall that Kawauchi's famous result \cite[{\sc\S}2]{Kaw09} provided a characterization for $\Q$-slice knots, showing that every strongly negative amphichiral knot is $\Q$-slice. Now, we prove an equivariant refinement of Kawauchi's theorem by proving that every Klein amphichiral knot is equivariant $\Q$-slice. This is essentially the first half of Theorem~\ref{thm:mainthm1}.

\begin{proof}[Proof of Theorem \ref{thm:mainthm1}, the $1^{st}$ half] 

Consider the product extension of $\rho$ and $\tau$ on $S^3\times[0,1]$ , which we will still denote them using the same letters.

Let $X_K$ be the $4$-manifold obtained from $S^3\times [0,1]$ by attaching a $0$-framed $2$-handle along $K\subset S^3\times\{1\}$. We can find a $\langle \rho,\tau\rangle$-invariant neighbourhood $N(K)\cong S^1\times D^2$ of $K$ such that
\begin{itemize}[leftmargin=2em]
    \item $\rho(z,w)=(\overline{z},\overline{w})$, for $(z,w)\in S^1\times D^2$,
    \item $\tau(z,w)=(\overline{z},-w)$ for $(z,w)\in S^1\times D^2$.
\end{itemize}

Therefore, we can extend $\rho$ and $\tau$ over the $2$-handle $D^2\times D^2$ by using the obvious extensions of the formulas above. Denote the core disk of the $2$-handle by $D=D^2\times \{0\}$. Observe in particular that $\rho(D)=D$.

Let $S^3_0(K)$ be the $0$-surgery of $K$. Notice that the restriction of $\tau$ on the $S^3_0(K)$ component of $\partial X_K$ is fixed-point free and orientation-reversing. Therefore, we can glue $X_K$ to itself by identifying $x\sim\tau(x)$ for all $x\in S^3_0(K)$, and obtain an oriented $4$-manifold $Z_K$ with $\partial Z_K = S^3$, namely the Kawauchi manifold.

Let $D'$ be the disk obtained by gluing the product cylinder $K\times [0,1]$ with $D$. In \cite[Lemma~2.3, Theorem~1.1]{Kaw09}, Kawauchi proves that $D'$ is a slice disk for $K$ in $Z_K$, and $Z_K$ is a $\Q$-homology $4$-ball.

Finally, since $\rho$ and $\tau$ commute, the action of $\rho$ on $X_K$ induces an involution $\rho_{Z_K}$ on $Z_K$ such that $\rho_{Z_K} (D') = D'$. This shows that $D'$ is actually an equivariant slice disk for $(K,\rho)$ in $Z_K$. Therefore, $(K,\rho)$ is equivariantly $\Q$-slice in the Kawauchi manifold $Z_K=Z$.
\end{proof}

Due to its construction, a priori the Kawauchi manifold $Z$ seems to depend on the strongly negative amphichiral knot. However, Levine \cite{Lev23} proved that $Z$ is a unique manifold in the sense that all strongly negative amphichiral knots bound smooth disks in $Z$. Our proof extends Levine's theorem to the equivariant case by showing that every Klein amphichiral knot $(K, \rho, \tau)$ bounds an equivariant disk in $(V, \rho_K)$, where $\rho_K$ is an involution extending $\rho$. This is the remaining part of Theorem~\ref{thm:mainthm1}.

\begin{proof}[Proof of Theorem \ref{thm:mainthm1}, the $2^{nd}$ half] 

Observe that Levine's original proof \cite{Lev23} has three main ingredients: a $5$-dimensional handlebody argument, an equivariant unknotting theorem of Boyle and Chen \cite[Proposition~3.12]{BC24}, and the equivariant isotopy extension theorem of Kankaanrinta \cite[Theorem~8.6]{Kan07}.

The first and last ingredients are essentially the same for our extended argument. In this setting, the second ingredient is simply replaced by Lemma~\ref{lem:equiv_crossing}.
\end{proof}

\subsection{The Equivariant Q-Concordance Group}
\label{sec:equivariant-concordance}

A \emph{direction} on a given strongly invertible knot $(K,\rho)$ is a choice of oriented \emph{half-axis} $h$, i.e. the choice of an oriented connected component of $\mathrm{Fix} (\rho)\setminus K$. We will call a triple $(K,\rho,h)$ a \emph{directed strongly invertible knot} (a \emph{DSI knot} in short). 

We say that two DSI knots $(K_0,\rho_0,h_0)$ and $(K_1,\rho_1,h_0)$ are \emph{equivariantly isotopic} if there exists and $\varphi\in\operatorname{Diff}^+(S^3)$ such that $\varphi(K_0)=K_1$, $\varphi\circ\rho_0=\rho_1\circ\varphi$ and $\phi(h_0)=h_1$ as oriented half-axes. Given two DSI knots $(K_0,\rho_0,h_0)$ and $(K_1,\rho_1,h_0)$, we may form \emph{equivariant connected sum} operation $\widetilde{\#}$, introduced by Sakuma \cite[{\sc\S}1]{Sak86}. This yields a potentially new DSI knot $( K_0 \widetilde{\#} K_1,\rho_0 \widetilde{\#} \rho_1, h_0\widetilde{\#}h_1 )$ whose oriented half-axis starts from the tail of the half-axis for $K_0$ and ends at the head of the half-axis for $K_1$, see Figure~\ref{fig:conn_sum}.

\begin{figure}[ht]
\centering
\begin{tikzpicture}

\node[anchor=south west,inner sep=0] at (0,0){\includegraphics[width=0.5\linewidth]{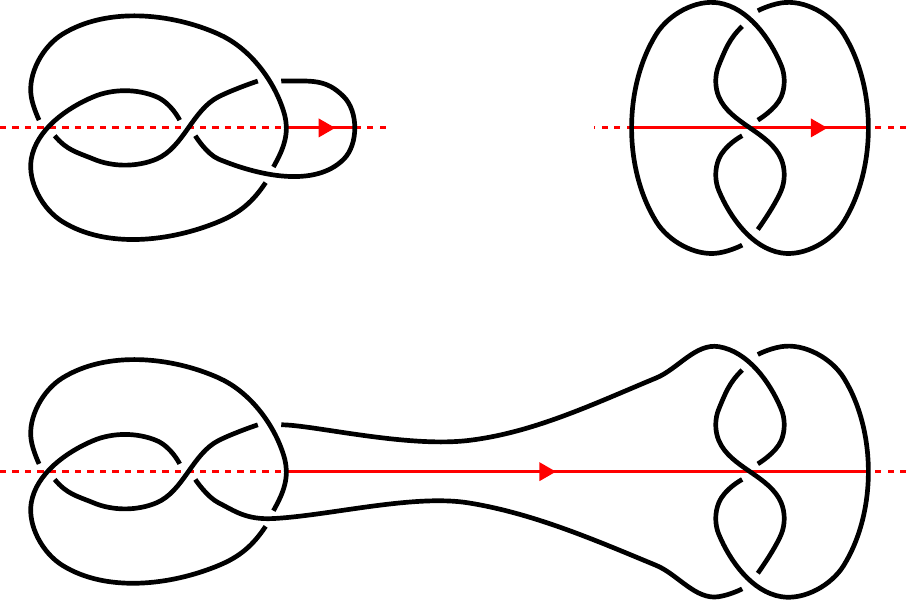}};
\draw (4.3,2.3) node[scale=2] (a) {$=$};
\draw (4.3,4.5) node[scale=2] (b) {$\widetilde{\#}$};
\end{tikzpicture}
    \caption{An example of equivariant connected sum.}
    \label{fig:conn_sum}
\end{figure}

Let $(K_0,\rho_0,h_0)$ and $(K_1,\rho_1,h_1)$ be two DSI knots in $S^3$. We call $(K_0,\rho_0,h_0)$ and $(K_,\rho_1,h_1)$ \emph{equivariant $\Q$-concordant} and denote it by $$(K_0,\rho_0,h_0) \thicksim_\Q (K_,\rho_1,h_1)$$ if there is a pair of smooth manifolds $\left ( W, A \right ) $  satisfying the following conditions: \begin{enumerate}
    \item $W$ is a $\Q$-homology cobordism from $S^3$ to itself, i.e., $H_* (W; \Q ) \cong H_* (S^3 \times [0,1]; \Q )$,
    \item $A$ is a submanifold of $W$ with $A \cong S^1 \times [0,1]$ and $\partial A \cong -(K_0) \cup K_1$,
    \item \label{defn:p3} There is an involution $\rho_W: W \to W$ that extends $\rho_0$ and $\rho_1$, and has $\rho_W (A) = A$,
    \item \label{defn:p4} Denote by $F$ the fixed-point surface of $\rho_W$. Then the half-axes $h_0,h_1$ lie in the same boundary component of $F-A$ and their orientations induce the same orientation on $F-A$.
\end{enumerate}

\begin{rem}
Observe that if $W$ as above is not a $\Z/2\Z$-homology cobordism then $\Fix(\rho_W)$ might not be an annulus. This implies that the equivariant $\Q$-sliceness for a directed strongly invertible knot is not equivalent to the equivariant $\Q$-sliceness of its \emph{butterfly link} (see Definition \ref{butterfly_link}) or \emph{moth link} (see \cite[Definition 5.2]{DPF23}), contrary to the non-rational case, as proved in \cite[Proposition 4.2]{BI22} and \cite[Proposition 5.4]{DPF23}.
As a consequence, several invariants obtained from these links, such as the butterfly and moth polynomial, do not automatically vanish for equivariant $\Q$-slice knots (compare with the computations in Section \ref{ssec:buttefly_moth}).
\end{rem}

As a natural extension of the equivariant conconcordance group $\CT$ defined by Sakuma \cite[{\sc\S}4]{Sak86} (see also Boyle and Issa \cite[{\sc\S}2]{BI22}), we introduce the \emph{equivariant $\Q$-concordance group} $\CTQ$ as $$\CTQ \doteq \left ( \left \{ \text{DSI knots in} \ S^3 \right \} / \thicksim_\Q , \ \widetilde{\#} \right ) .$$ 

\noindent We have the following main properties:

\begin{itemize}[leftmargin=2em]
    \item The operation is induced by the equivariant connected sum $\widetilde{\#}$.
    \item The identity element is the equivariant $\Q$-concordance class of the unknot $(U, \rho_U , h_U)$\footnote{The unknot $U$ in $S^3$ has a unique strong inversion, see \cite[Definition~1.1, Lemma~1.2]{Sak86}.}, see Figure~\ref{fig:unknot}.
    \item The inverse element for $(K, \rho , h)$ is given by axis-inverse of the mirror of $K$, i.e., $(\overline{K}, \rho , -h)$.
\end{itemize}

\begin{figure}[ht]
    \centering
    \includegraphics[width=0.15\linewidth]{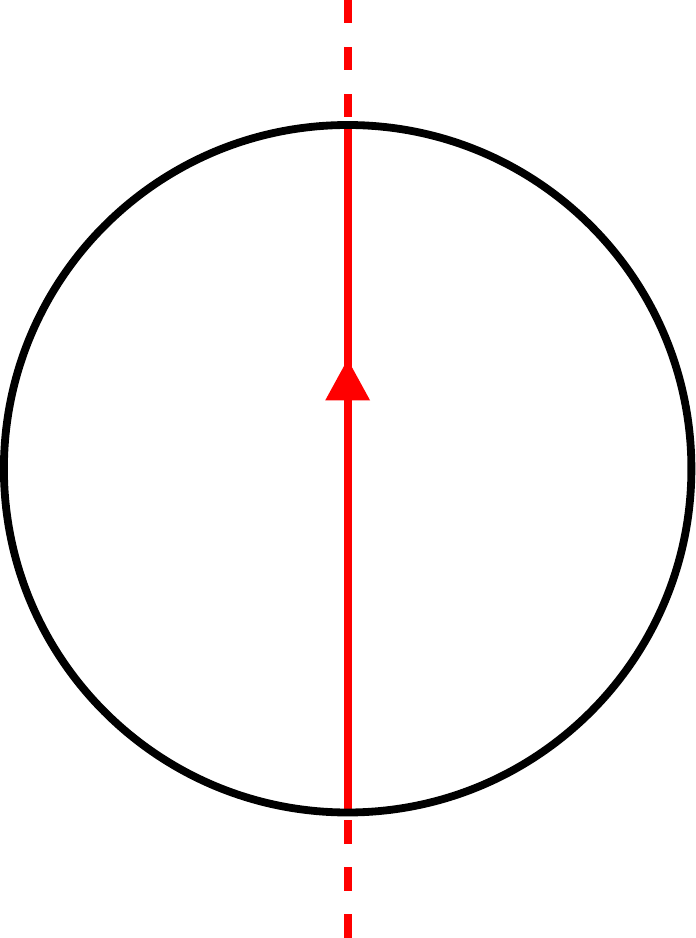}
    \caption{The DSI unknot.}
    \label{fig:unknot}
\end{figure}

\subsection{A Construction for Equivariant Q-Slice Knots}
\label{sec:constructions}

In this subsection, we construct examples of equivariant $\Q$-slice knots. We now introduce the first family of examples which are obtained by using the following doubling argument.

Given an oriented knot $K$, its \emph{double} $\mathfrak{r}(K)$ is the DSI knot obtained by $K\#r(K)$, (where $r(K)$ is the \emph{reverse} of $K$, i.e. the same knot but oppositely oriented) with the involution $\rho$ that exchanges $K$ and $r(K)$ (namely the $\pi$-rotation around the vertical axis in Figure~\ref{rK}. The direction on $\mathfrak{r}(K)$ is given as follows: the connected sum can be performed by a suitable band move along a grey band $B$ where $\Fix(\rho)\cap B$ is the half-axis $h$. Here, $h$ is oriented as the portion of $B$ lying on $K$.

\begin{figure}[ht]
\centering
\begin{tikzpicture}

\node[anchor=south west,inner sep=0] at (0,0){\includegraphics[scale=0.3]{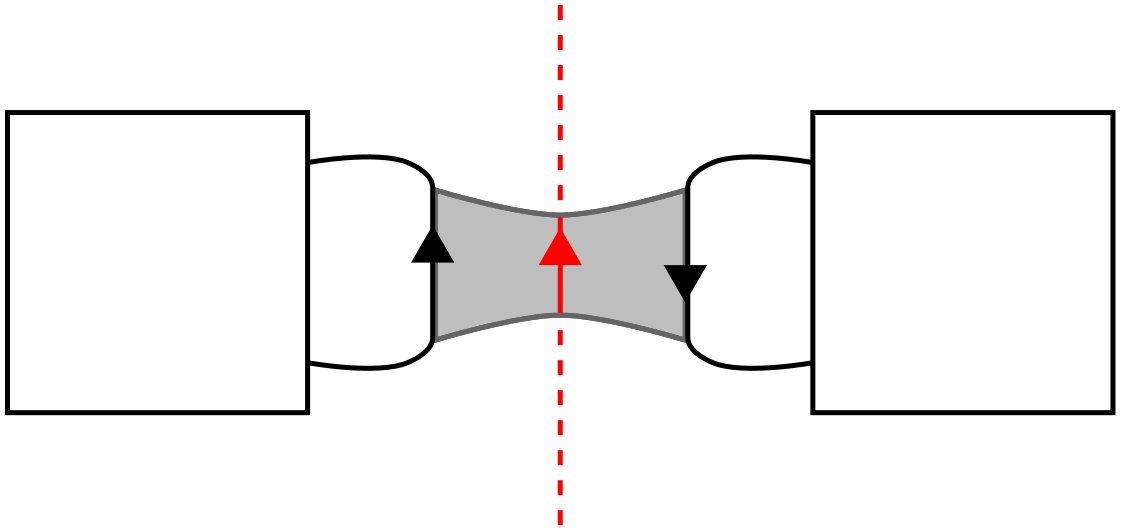}};
\node[label={$K$}] at (1.2,1.7){};
\node[label={$h$}] at (4.1,1.7){};
\node[label={$r(K)$}] at (7.8,1.7){};

\end{tikzpicture}
    \caption{The DSI knot $\mathfrak{r}(K)$ with the solid chosen half-axis.}
    \label{rK}
\end{figure}

As proven by Boyle and Issa \cite[{\sc\S}2]{BI22}, $\mathfrak{r}$ defines an injective homomorphism $\mathfrak{r}:\C \longrightarrow \CT.$ The same argument actually shows that $\mathfrak{r}$ also induces a homomorphism $\mathfrak{r}_{\Q}:\CQ\to\CTQ$ (not necessarily injective),
which fits into the following commutative diagram
\begin{center}
    \begin{tikzcd} \C\ar[r,"\mathfrak{r}"]\ar[d,"\psi"]&\CT\ar[d,"\Phi"]\\
        \CQ\ar[r,"\mathfrak{r}_{\Q}"]&\CTQ.
    \end{tikzcd}
\end{center}

Therefore, the first family of DSI knots are trivial in the sense that they lie in the $\Ker (\Phi)$ and are simply given by the image of $\mathrm{Ker}(\psi)$ under $\mathfrak{r}$.

It is known that the algebraic structure of $\mathrm{Ker}(\psi)$ is very complicated. In particular, $\mathrm{Ker}(\psi)$ contains a $\Z^\infty \oplus (\Z/2\Z)^\infty$ subgroup due to the work of Cha \cite{Cha07} and Hom, Kang, Park, and Stoffregen \cite{HKPS22}.

At the moment, the structure of $\CT$ is quite mysterious. The first author proves that $\CT$ is non-abelian \cite{DP23} and together with Framba that $\CT$ is also non-solvable \cite{DPF23}. He also observes in \cite[Corollary~1.16]{DP23b} that the equivariant concordance group splits as the following direct sum
$$\CT= \mathrm{Ker} (\mathfrak{b})\oplus\mathfrak{r}(\mathcal{C}),$$
where $\mathrm{Ker} (\mathfrak{b})$ is the subgroup of $\CT$ formed by DSI knots $K$ such that their \emph{$0$-butterfly links} $L_b^0(K)$ (see Definition \ref{butterfly_link} and \cite{BI22}) have slice components (but they are not necessarily slice as links).

Now, we construct the non-trivial examples of equivariant $\Q$-slice knots in $S^3$. Let $$\J = \{ n \in \mathbb{N} \ \vert \ n>1 \ \text{and} \ n \not \equiv 0 \mod 3 \} .$$ We can write $$\J = \JE \sqcup \JO = \{ n \in \J \ \vert \ n \ \text{is even} \} \sqcup \{ n \in \J \ \vert \ n \ \text{is odd} \}.$$

\begin{defn}
\label{defn:turks-head}
For $n \in \J$, the \emph{Turk's head knots} $J_n = Th (3,n)$ are defined as the $3$-braid closures $$J_n \doteq \reallywidehat{(\sigma_1 {\sigma_2}^{-1})^n } ,$$ where a single $3$-braid $\sigma_1 {\sigma_2}^{-1}$ is depicted as follows:

\vspace{0.7 em}

\begin{center}
\begin{tikzpicture}
\pic[
rotate=0,
braid/.cd,
every strand/.style={thick},
strand 1/.style={black},
strand 2/.style={black},
strand 3/.style={black},
] {braid={s_1^{-1} s_2 }};
\end{tikzpicture}
\end{center}    
\end{defn}

\vspace{0.7 em}

The Turk's head knots $J_n$ are known to be alternating, cyclically $n$-periodic, fibered, hyperbolic, prime, strongly invertible, strongly negative amphichiral, see our recent survey paper \cite{DPS24}. 

Using Knotinfo \cite{knotinfo} and Knotscape \cite{knotscape}, we can identify the Turk's head knots $J_n$ with small number of crossings. In particular, we have $$J_2 = 4_1, \ J_4 = 8_{18}, \ J_5 = 10_{123}, \ J_7 = 14_{a19470}, \ \text{and},  J_8 = 16_{a275159} .$$

We will need the following three important properties of $J_n$. Their symmetry groups were computed by Sakuma and Weeks \cite[Proposition~I.2.5]{SW95}. The recent work of AlSukaiti and Chbili \cite[Corollary~3.5, Proposition~5.1]{AC23} provided their determinants and the roots of their Alexander polynomials. For more references, one can consult our recent survey \cite{DPS24}.

\begin{enumerate}
    \item \label{property:SW} For a fixed value of $n$, we have $$\mathrm{Sym} (S^3 , J_n) \cong D_{2n} \CommaPunct$$ where $D_{m}\cong\Z/m\Z\rtimes\Z/2\Z$ denotes the dihedral group of $2m$ elements.
    
    \item \label{eq:determinant} Let $L_k$ denote the $k^{th}$ Lucas number.\footnote{The Lucas numbers are defined recursively as $L_0 = 2, L_1 =1$, and $L_k = L_{k-1} + L_{k-2}$ for $k \geq 2$.} Then we have $$\mathrm{det}(J_n) = \Delta_{J_n} (-1) = L_{2n} - 2 \cdot$$
    
    \item \label{eq:roots} The roots of the Alexander polynomial $\Delta_{J_n} (t)$ are of the form: $$z = -\frac{1}{2} \left( 2\cos \left( \frac{2k}{n} \pi \right ) -1 \pm \sqrt{ \left (2\cos \left (\frac{2k}{n} \pi \right ) -1 \right ) ^2 -4 } \right) \CommaPunct $$ with $1 \leq k \leq \lfloor n/2 \rfloor.$
\end{enumerate}

In the following proposition, we explicitly determine the number of strong inversions for the Turk's head knots $J_n$.

\begin{prop}
\label{prop:equivalent}
The Turk's head knot $J_n$ has at most two inequivalent (i.e. not conjugate in $\mathrm{Sym}^+(S^3,J_n)$) strong inversions, say $\rho_1$ and $\rho_2$. In particular, \begin{itemize}[leftmargin=2em]
    \item if $n \in \JO$, then the strong inversion is unique and $J_n$ is Klein amphichiral,
    \item if $n\in \JE$, then $J_n$ has exactly two inequivalent strong inversions, which are conjugated by an element of $\mathrm{Sym}(S^3,J_n)$.
\end{itemize}
\end{prop}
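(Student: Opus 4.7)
My plan is to analyze the group $G := \mathrm{Sym}(S^3, J_n) \cong D_{2n}$ (of order $4n$, by Property~(\ref{property:SW})) and to count conjugacy classes of strong inversions inside its index-$2$ orientation-preserving subgroup $G^+ := \mathrm{Sym}^+(S^3, J_n)$, since by definition two strong inversions on $J_n$ are equivalent precisely when they are $G^+$-conjugate.

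For $n \in \JO$, the statement is immediate from Proposition~\ref{prop:equivariant} (applied with $m = n$): $J_n$ is hyperbolic, invertible, negative amphichiral, and $\mathrm{Sym}(S^3, J_n) = D_{2n}$ with $n$ odd, so the proposition simultaneously provides the Klein amphichirality of $J_n$ and the uniqueness of the strong inversion up to equivalence.

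For $n \in \JE$, I would first identify $G^+$ up to isomorphism. Recall that $D_{2n}$ has three index-$2$ subgroups: one cyclic copy $\Z/2n\Z$ and two copies of $D_n$. Let $\phi \in G^+$ denote the $n$-periodic rotation of $J_n$, which has order $n$. Since $n$ is even, the unique involution in $\langle \phi \rangle$ is $\phi^{n/2}$; but $\phi^{n/2}$ is a power of a rotation and therefore preserves the orientation of $J_n$, so it cannot be a strong inversion. On the other hand, $J_n$ is strongly invertible, so $G^+$ must contain some strong inversion $\rho$, and $\rho$ cannot lie in $\langle \phi \rangle$. This rules out the cyclic possibility and forces $G^+ = \langle \phi, \rho \rangle \cong D_n$, with the strong inversions of $J_n$ given precisely by the reflections $\rho \phi^i$ for $0 \leq i < n$. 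Since $n$ is even, these $n$ reflections split into exactly two $G^+$-conjugacy classes of size $n/2$ each (distinguished by the parity of $i$), yielding the desired two inequivalent strong inversions $\rho_1$ and $\rho_2$.

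To finish, I would verify that $\rho_1$ and $\rho_2$ are $G$-conjugate. Writing $G = \langle t, s \mid t^{2n} = s^2 = 1,\; sts = t^{-1} \rangle$ and taking $G^+ = \langle t^2, s \rangle$ with $\phi = t^2$ and $\rho = s$, the identity $sts = t^{-1}$ yields $tst^{-1} = st^{-2}$, so conjugation by $t \in G \setminus G^+$ sends $s$ (an even-index reflection in $G^+$) to $st^{-2}$ (an odd-index reflection, since $n$ is even); hence a single element of $G \setminus G^+$ already exchanges the two $G^+$-classes. The main delicate point is the identification $G^+ \cong D_n$ in the $n$-even case, which ultimately rests on the geometric fact that no power of the $n$-periodic rotation can reverse $J_n$; everything else reduces to standard conjugacy calculations in dihedral groups.
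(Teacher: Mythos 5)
Your argument is correct, but for the even case it takes a genuinely different route from the paper. The paper's proof rests on Sakuma's \cite[Proposition~3.4]{Sak86}, which for a hyperbolic, invertible, amphichiral knot gives the bound of at most two strong inversions together with the dichotomy that they are inequivalent if and only if the knot admits a period-$2$ symmetry; the even case is then dispatched by exhibiting the obvious period-$2$ symmetry from the braid word, and the odd case by noting that the conjugacy-class structure of $D_{2n}$ with $n$ odd (via Proposition~\ref{prop:equivariant}) leaves no room for a period-$2$ symmetry. You instead replace Sakuma's dichotomy by a direct analysis of the index-$2$ orientation-preserving subgroup of $D_{2n}$: ruling out the cyclic subgroup because its unique involution is a power of the periodic rotation, identifying $\mathrm{Sym}^+(S^3,J_n)\cong D_n$, and counting the two reflection classes for $n$ even. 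This is more self-contained at the group-theoretic level, and it has the concrete advantage of actually \emph{proving} the final clause of the statement --- that $\rho_1$ and $\rho_2$ are conjugate by an element of the full group $\mathrm{Sym}(S^3,J_n)$ --- which the paper's proof asserts but does not verify; your computation $tst^{-1}=st^{-2}$ does exactly that. The price is that you lean implicitly on hyperbolicity in two places that deserve a sentence each: the identification of equivalence classes of honest involutions of $(S^3,J_n)$ with conjugacy classes in the mapping class group, and the claim that \emph{every} reflection of $\mathrm{Sym}^+(S^3,J_n)$ (an order-$2$ mapping class reversing the orientation of $K$) is realized by a smooth involution of $S^3$, hence is a strong inversion; both follow from Mostow rigidity and the realization of the symmetry group by isometries, which is the same geometric input that underlies Sakuma's proposition. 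With those two remarks added, your proof is complete and, if anything, slightly stronger than the one in the paper.
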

\begin{proof}
According to \cite[Proposition 3.4]{Sak86} $J_n$ admits at most two inequivalent strong inversion, since it is invertible, amphichiral and hyperbolic. More precisely, Sakuma proved that the followings are equivalent:
\begin{itemize}[leftmargin=2em]
    \item $J_n$ admits a period $2$ symmetry,
    \item $\rho_1$ and $\rho_2$ are not equivalent.
\end{itemize}

Since by property \ref{property:SW} we have $\mathrm{Sym}(S^3,J_n)\cong D_{2n}$ and $n$ is odd, we know that we only have three conjugacy classes of involutions in $\mathrm{Sym}(S^3,J_n)$. By Proposition \ref{prob:+amphichiral} these classes are exactly given by a strongly inversion $\rho$ and a strongly negative and positive amphichiral involutions $\tau$ and $\delta$. In particular, $J_n$ cannot admit a period $2$ symmetry (cf. \cite[p.~332]{KS92}). In Figure~\ref{fig:klein_Jn} we can explicitly see that the maps $\tau$ and $\rho$ commute. Therefore, $\rho$ is the unique strong inversion.

On the other hand, if $n$ is even, then $J_n$ admits an obvious a period $2$ symmetry, which can be seen, for example from the braid description in Definition \ref{defn:turks-head}. Therefore, $\rho_1$ and $\rho_2$ are inequivalent strong inversions. 
\end{proof}

\begin{figure}[ht]
    \centering
\begin{tikzpicture}
\node[anchor=south west,inner sep=0] at (0,0){\includegraphics[scale=1]{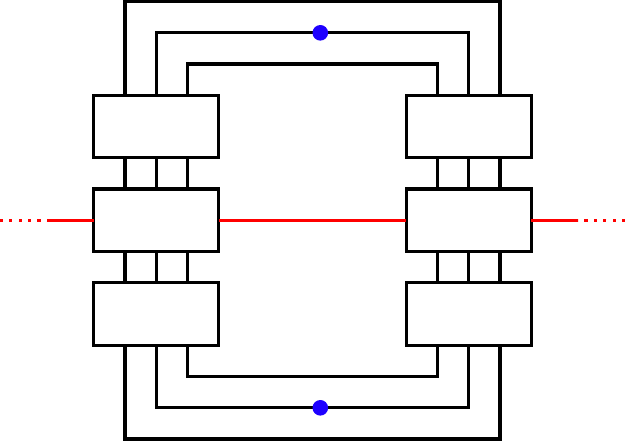}};
\node at (2.6,5.4) (b1){$(\sigma_1\sigma_2^{-1})^k$};

\node at (2.6,2.2) (b2){$(\sigma_2^{-1}\sigma_1)^k$};

\node at (8,5.4) (c1){$(\sigma_1\sigma_2^{-1})^k$};
\node at (8,2.2) (c2){$(\sigma_2^{-1}\sigma_1)^k$};

\node at (2.6,3.8) (a1) {$\alpha$};

\node at (8,3.8) (a1) {$\beta$};

\end{tikzpicture}
    \caption{The Turk's head knot $J_n$ for $n$ odd. If $n=4k+1$ then $\alpha=\sigma_1$ and $\beta=\sigma_2^{-1}$, while for $n=4k-1$ we have $\alpha=\sigma_2$ and $\beta=\sigma_1^{-1}$. Its Klein amphichiral symmetry is represented in Figure \ref{fig:10_123_sym}.}
    \label{fig:klein_Jn}
\end{figure}

\begin{rem}
We fix here the choice of direction on $J_n$ for $n$ odd. We will always consider $J_n$ as a DSI knot with the involution $\rho$ given by the $\pi$-rotation around the red axis in Figure \ref{fig:klein_Jn} and the chosen half-axis $h$ is the bounded one in the figure, oriented from left to right.
In the following, we will omit to recall these choices. Observe that the strongly negative amphichiral involution $\tau$ (see \ref{fig:klein_Jn}) maps the DSI knot $(J_n,\rho,h)$ to $(\overline{J_n}, \rho, -h')$, where $-h'$ is the complementary half-axis endowed with the opposite orientation. Therefore, while in general these choices would be relevant for the computations in Section \ref{sec:independence}, in this specific case, changing the DSI structure would change the invariants computed in Section \ref{sec:independence} at most by a sign.
\end{rem}
\begin{cor}
If $n \in \JO$, then the Turk's head knots $J_n$ are equivariant $\Q$-slice.
\end{cor}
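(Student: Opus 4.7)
The plan is to chain together the two main results already established earlier in this section. By Proposition~\ref{prop:equivalent}, when $n \in \JO$ the Turk's head knot $J_n$ is Klein amphichiral with the essentially unique strong inversion $\rho$ and the strongly negative amphichiral involution $\tau$ exhibited in Figure~\ref{fig:klein_Jn}. Once Klein amphichirality is in hand, Theorem~\ref{thm:mainthm1} immediately applies and produces an equivariant slice disk for $(J_n,\rho)$ in the Kawauchi manifold $Z$.

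More concretely, I would write: fix $n \in \JO$ and let $\rho, \tau \in \operatorname{Diff}(S^3)$ be the involutions from Proposition~\ref{prop:equivalent}. Because $n$ is odd, the symmetry group is $D_{2n}$ with $n$ odd, and the conjugacy analysis in the proof of Proposition~\ref{prop:equivalent} shows that $\rho$ and $\tau$ commute, i.e.\ $(J_n,\rho,\tau)$ is Klein amphichiral in the sense of Definition~\ref{defn:klein}. Now apply Theorem~\ref{thm:mainthm1}: there is an involution $\rho_K$ on the Kawauchi manifold $Z$ extending $\rho$, and an equivariant slice disk for $(J_n,\rho)$ in $(Z,\rho_K)$. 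Hence $(J_n,\rho)$ is equivariant $\Q$-slice.

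There is essentially no obstacle in this argument — both ingredients have already been proved, and the corollary is a formal consequence. The only thing to check is that the conventions match: the direction $h$ fixed in the preceding remark is compatible with the Klein amphichiral structure, but equivariant $\Q$-sliceness as stated in {\sc\S}\ref{sec:equivariant-slice} does not depend on a choice of direction, so this is automatic. If a directed statement were desired later, one would additionally verify that the fixed-point surface of $\rho_K$ in $Z$ is connected along the axis, using the same argument as in the construction of $Z_K$ from the product $S^3 \times [0,1]$ with a $2$-handle attached.
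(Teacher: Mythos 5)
Your argument is correct and is essentially identical to the paper's own proof: both establish that $(J_n,\rho,\tau)$ is Klein amphichiral via Proposition~\ref{prop:equivalent} (using that $n$ is odd) and then invoke Theorem~\ref{thm:mainthm1} to conclude equivariant $\Q$-sliceness. The additional remarks about directions are harmless but not needed, since equivariant $\Q$-sliceness as defined does not depend on the choice of half-axis.
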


\begin{proof}

We know that $J_n$ is always strongly negative amphichiral. Since $n \in \JO$, by Proposition~\ref{prop:equivalent}, the knots $(J_n , \rho, \tau)$ are all Klein amphichiral. Then from Theorem~\ref{thm:mainthm1} we know that $(J_n,\rho)$ is equivariantly $\Q$-slice. 
\end{proof}

\begin{rem}
    Let $\mathrm{gcd} (p,q) = 1$. If $p$ and $q$ are both odd integers, then the Turk's head knots $Th(p,q)$ are strongly invertible and strongly negative amphichiral, see \cite[{\sc\S}3.4]{DPS24}. A positive answer to \cite[Conjecture~B]{DPS24} would also prove that the knots $Th(p,q)$ are all equivariant $\Q$-slice.
\end{rem}

\subsection{An Obstruction for Equivariant Q-Slice Knots}
\label{sec:obstructions}

In this subsection, we prove an equivariant rational version of the classical Fox-Milnor condition, which is a generalization of the result by Cochran, Franklin, Hedden, and Horn \cite{CFHH13}. Here, we normalize the Alexander polynomial of a knot $K$ such that $$\Delta_K (1) = 1 \quad \text{and} \quad \Delta_K (t) = \Delta_K (t^{-1}).$$

Now, we prove Theorem~\ref{thm:mainthm2}, claiming that the Alexander polynomial of an equivariant $\Q$-slice knot must be square.

\begin{proof}[Proof of Theorem~\ref{thm:mainthm2}]

Let $(K,\rho)$ be an equivariant $\Q$-slice knot with a slice disk $D\subset Z$ in a $\Q$-homology $4$-ball $Z$ and let $\rho$ be an involution on $Z$ extending $\rho_K$ such that $\rho(D)=D$. Let $Z_D = Z \setminus N(D)$ be the equivariant slice disk exterior, where $N(D)$ is a $\rho$-invariant tubular neighborhood of $D$. Denote the inclusion of the boundary by $i:\partial Z_D\cong S^3_0(K)\to Z_D$. 

Since $H_1(Z_D ;\Q) = H_1(S^1 \times B^3 ;\Q)$, we have $H_1(Z_D ;\Z)/ \mathrm{torsion} \cong\Z$.
Let $$\phi:\pi_1(Z_D)\to \Z$$ be the projection map on the homology modulo torsion.
Given a space $X$ and a map $\epsilon:\pi_1(X)\to\Z=\langle t\rangle$ we denote by $H_*(X,\epsilon)$ the integral homology of $X$ twisted by $\epsilon$, which is naturally a $\Z[t^{\pm1}]$-module.
By \cite[Proposition~4.6]{CFHH13}, the order of $H_1(S^3_0(K),\phi\circ i)$ is $\Delta_K(t^n)$ where $n$ is the complexity of the slice disk, i.e., the absolute value of the element represented by a meridian of $D$ in $H_1(Z_D ;\Z)/ \mathrm{torsion} \cong\Z$. Let $M$ be the kernel of
$$ i_*:H_1(S^3_0(K),\phi\circ i)\to H_1(Z_D,\phi), $$
and denote its order by $f(t)$. Then, by using \cite[Proposition~4.5]{CFHH13}, we see that $\Delta_K(t^n)=f(t)f(t^{-1})$. Now, observe that $\phi\circ\rho_*=(-id)\circ\phi$ as follows:

\begin{center}
\begin{tikzcd}
\pi_1(Z_D) \arrow{r}{\phi} \arrow[swap]{d}{\rho_*} & \Z \arrow{d}{-id } \\
\pi_1(Z_D) \arrow{r}{\phi} & \Z
\end{tikzcd}
\end{center}

Therefore the order of $\rho(M)$ is $f(t^{-1})$. Since the inclusion map $i$ is $\rho$-equivariant we have that $\rho(M)=M$, hence $f(t)=f(t^{-1})$. Hence $\Delta_K(t^n)$ is a square. In turn, this implies that $\Delta_K(t)$ is also a square.
\end{proof}

Using the equivariant Fox-Milnor condition in Theorem~\ref{thm:mainthm2}, we now prove that the other half of the knots $J_n$ are not trivial in $\CTQ$.

\begin{prop}
If $n \in \JE$, then the Turk's head knots $J_n$ are not equivariant $\Q$-slice.    
\end{prop}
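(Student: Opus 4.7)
The plan is to argue by contradiction using Theorem~\ref{thm:mainthm2}. If $(J_n,\rho)$ were equivariant $\Q$-slice, then $\Delta_{J_n}(t) = g(t)^2$ for some $g(t) \in \Z[t,t^{-1}]$, which would force the determinant $\det(J_n) = |\Delta_{J_n}(-1)| = g(-1)^2$ to be a perfect square in $\Z$. By property (\ref{eq:determinant}), $\det(J_n) = L_{2n} - 2$, so the entire matter reduces to showing that $L_{2n} - 2$ is not a perfect square when $n \in \JE$.

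For this I would invoke the classical Lucas identity $L_{2n} = L_n^2 - 2(-1)^n$, which follows in one line from Binet's formula $L_n = \phi^n + \psi^n$ together with $\phi\psi = -1$. For even $n$ this specializes to
$$\det(J_n) \;=\; L_n^2 - 4 \;=\; (L_n - 2)(L_n + 2).$$
If $(L_n - 2)(L_n + 2) = m^2$ for some $m \in \Z_{\geq 0}$, then $(L_n - m)(L_n + m) = 4$, and an inspection of the factorizations of $4$ into two integers of the same parity forces $L_n = 2$, i.e.\ $n = 0$. Since every $n \in \JE$ satisfies $n \geq 2$ and hence $L_n \geq 3$, this rules out the equivariant $\Q$-sliceness of $J_n$.

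There is no serious obstacle here; the argument is essentially immediate once Theorem~\ref{thm:mainthm2} is in hand. The only delicate point is that squareness of $\Delta_{J_n}(t)$ as a symmetric Laurent polynomial with integer coefficients really does pass cleanly to a statement about the integer $\det(J_n)$, which it does by the evaluation argument above. As a consistency check, the same Lucas identity yields $L_{2n} - 2 = L_n^2$ when $n$ is odd, in perfect agreement with the equivariant $\Q$-sliceness of $J_n$ for $n \in \JO$ established earlier via Theorem~\ref{thm:mainthm1}.
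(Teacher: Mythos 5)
Your proof is correct and takes essentially the same route as the paper: apply Theorem~\ref{thm:mainthm2}, evaluate at $t=-1$ to reduce to the determinant, and use the Lucas identity $L_{2n}=L_n^2-2(-1)^n$. In fact your write-up is slightly more careful than the paper's: you actually prove that $L_{2n}-2=(L_n-2)(L_n+2)$ is never a perfect square for $n\geq 2$ via the factorization $(L_n-m)(L_n+m)=4$, whereas the paper merely asserts non-squareness, and you also implicitly correct a small arithmetic slip there (the paper writes $\det(J_n)=(L_n)^2-2$ for $n\in\JE$ where it should be $(L_n)^2-4$; both happen to be non-squares, so the conclusion is unaffected).
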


\begin{proof}
It is a well-known fact that Lucas numbers satisfy the following equality:
$$
(L_n)^2=L_{2n}+(-1)^n\cdot2.
$$
If $n\in \JE$, then $\det(J_n)=L_{2n}-2=(L_n)^2-2$ by the identity (\ref{eq:determinant}). Since the determinant is not a perfect square, for an arbitrary $n$, $J_n$ is not equivariant $\Q$-slice by Theorem \ref{thm:mainthm2}.
\end{proof}

Generalizing Kirby calculus argument by Fintushel and Stern \cite{FS84}, in \cite[Theorem~4.14]{Cha07} Cha exhibits a familiy of infinitely many $\Q$-slice knots $K_n$ in $S^3$, depicted in Figure~\ref{fig:cha_knots}. Since the knots $K_n$ are clearly strongly negative amphichiral (see \cite[Figure~5]{Cha07}), Cha's result can be reproven by Kawauchi's characterization. Note that $K_1$ is the figure-eight knot.

\begin{figure}[ht]
    \centering
\begin{tikzpicture}
\node[anchor=south west,inner sep=0] at (0,0){\includegraphics[scale=0.6]{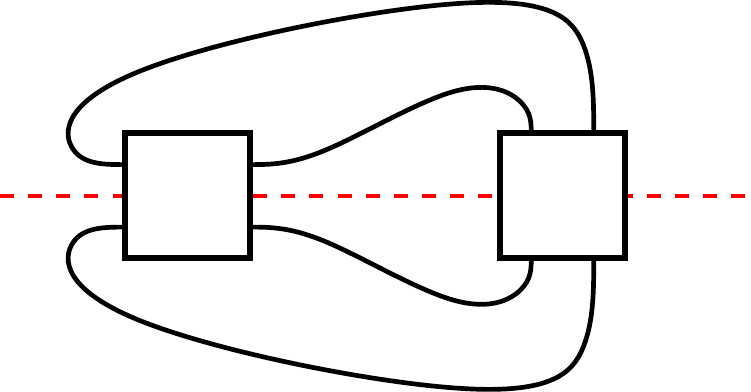}};
\node[scale=1.7] at (1.9,2) (b4){$-n$};
\node[scale=1.7] at (5.8,1.99) (b37){$n$};

\end{tikzpicture}
    \caption{The knots $K_n$. The square box with the integer $n$ (resp. $-n$) represents the right-handed (resp. the left-handed) $n$ full twists. The strong inversion is the $\pi$-rotation around the dashed axis.}
    \label{fig:cha_knots}
\end{figure}

Now, we are ready to prove Theorem \ref{thm: mainthm4}, showing that Cha's knots $K_n$ are also not equivariant $\Q$-slice.

\begin{proof}[Proof of Theorem \ref{thm: mainthm4}]

Assume that $K_n$ is equivariant $\Q$-slice for each $n \geq 1$. By \cite[Theorem~4.14]{Cha07}), we know that $$\Delta_{K_n} (t) = -n^2 t^{-1} + 2n^2 + 1 - n^2 t .$$ Then we get $$\mathrm{det} (K_n) = \Delta_{K_n} (-1) = 4n^2 + 1 = (2n^2)^2 + 1.$$ Then, by Theorem \ref{thm:mainthm2}, the determinant must be a square, which is a contradiction. Therefore, the knots $K_n$ are not equivariant $\Q$-slice.

Let $H$ be the subgroup of $\widetilde{\C}_\Q$ spanned by $K_n$ for $n\geq1$ (fix any choice of strong inversion and direction on $K_n$). Let $K=\widetilde{\#}^{a_1}K_{n_1}\widetilde{\#}\dots\widetilde{\#}^{a_l}K_{n_l}, $ where the equivariant connected sum is taken with respect to any ordering of the knots. 

Since the polynomials $\Delta_{K_n}(t)$ are quadratic, it is not difficult to check that they are pairwise coprime. Therefore, $\Delta_K(t)$ is a square if and only if $a_i=0\mod{2}$ for $i=1,\dots,l$. Hence, the group $H$ surjects onto $(\Z/2\Z)^\infty$.
\end{proof}

\begin{rem}
\label{rem:algebraic concordance}
One can easily check by using the so-called \emph{equivariant signature jumps homomorphism} $\widetilde{J}_\lambda:\widetilde{\C}\to\Z$ introduced in \cite[Definition 6.1]{DP23b} and by applying \cite[Theorem 6.9]{DP23b} that the knots $K_n$ span a subgroup of $\widetilde{\C}$ which surjects onto $\Z^\infty$. As the classical Levine-Tristram signatures provide invariants of $\Q$-concordance (see \cite{CK02}), we expect the maps $\widetilde{J}_\lambda$ to factor through $\widetilde{\C}_\Q$. This would prove that the subgroup $H\subset\widetilde{\C}_\Q$ described in the proof of Theorem~\ref{thm: mainthm4} would actually surject onto $\Z^\infty$. Compare with Problem~\ref{prob:algebraic concordance}.
\end{rem}

\section{Proof of Theorem \ref{thm: mainthm3}}
\label{sec:independence}
In this section, we recall some preliminary notions and we prove the intermediate results needed in order to prove Theorem \ref{thm: mainthm3}. 

\subsection{Weighted Graphs, Spanning Trees and Gordon-Litherland form}
In this subsection, we recall some useful results about the Gordon-Litherland form and graph theory.
\label{sec:graphs}
\begin{defn}
A \emph{weighted graph} $\Gamma=(V,E)$ is a simple graph with vertex set $V$ and edges $E$ with the additional data of a \emph{weight} $\lambda_{e}=\lambda_{ij}=\lambda_{ji}\in\R$, if there exists $e\in E$ connecting $i,j\in V$. If two vertices are not connected by an edge the weight is understood to be zero.
We associate a \emph{Laplacian matrix} $\L(\Gamma)$ for each weighted graph by letting $$
\L(\Gamma)_{i,j}=\begin{cases}
    -\lambda_{ij}\quad\text{if}\quad i\neq j\\
    \sum_{k\neq i}\lambda_{ik}\quad\text{if}\quad i=j.
\end{cases}
$$
In the following, we will denote by $\L(\Gamma;i)$ the square matrix obtained from $\L(\Gamma)$ by removing the $i$-th column and row.
\end{defn}

\begin{defn}
Let $\Gamma=(V,E)$ be a weighted graph. A \emph{spanning tree} of $G$ is subgraph $T=(V_T,E_T)$ of $\Gamma$ such that
\begin{itemize}
    \item $T$ is a tree,
    \item the vertex set of $T$ is equal to $V$.
\end{itemize}
We denote the \emph{weight} of $T$ by
$$
w(T)=\prod_{e\in E_T}\lambda_e.
$$
Finally, we define the \emph{(weighted) number of spanning trees} of $\Gamma$ as
$$
\T(\Gamma)=\sum_{T\subset\Gamma\;\text{spanning tree}}w(T).
$$
\end{defn}

\begin{thm}\cite[Theorem VI.29]{Tut01}\label{thm:matrix_tree}
Let $\Gamma=(V,E)$ be a weighted graph. Then for every $i\in V$, we have
$$
\det(\L(\Gamma;i))=\T(\Gamma).
$$
\end{thm}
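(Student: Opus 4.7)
The plan is to prove this classical Matrix-Tree theorem via the Cauchy-Binet strategy. First, I would fix an arbitrary orientation on each edge of $\Gamma$ and introduce the signed incidence matrix $B$ of size $|V|\times|E|$ with entries
\[
B_{v,e}=\begin{cases}+1 & \text{if $v$ is the head of $e$,}\\ -1 & \text{if $v$ is the tail of $e$,}\\ 0 & \text{otherwise.}\end{cases}
\]
Let $D=\mathrm{diag}(\lambda_e)_{e\in E}$ be the diagonal matrix of weights, and observe by a direct computation that the $(u,v)$-entry of $BDB^T$ equals $\sum_e \lambda_e B_{u,e}B_{v,e}$, which recovers $-\lambda_{uv}$ off the diagonal and $\sum_{k\neq u}\lambda_{uk}$ on the diagonal. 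Hence $\L(\Gamma)=BDB^T$, and after deleting the $i$-th row we obtain $\L(\Gamma;i)=B_i D B_i^T$, where $B_i$ is the submatrix of $B$ with row $i$ removed.

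Next, I would apply the Cauchy-Binet formula to the product of the rectangular matrices $B_i$, $D$ and $B_i^T$ to obtain
\[
\det\bigl(\L(\Gamma;i)\bigr)=\sum_{\substack{S\subseteq E\\ |S|=|V|-1}}\det\bigl(B_i|_S\bigr)\,\det\bigl(D|_S\bigr)\,\det\bigl(B_i^T|_S\bigr)=\sum_{\substack{S\subseteq E\\|S|=|V|-1}}\Bigl(\prod_{e\in S}\lambda_e\Bigr)\,\det\bigl(B_i|_S\bigr)^2,
\]
where $B_i|_S$ denotes the square $(|V|-1)\times(|V|-1)$ submatrix of $B_i$ whose columns are indexed by $S$.

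The main step, and the one I expect to take the most work, is the combinatorial lemma that $\det(B_i|_S)=\pm 1$ whenever the edge set $S$ forms a spanning tree of $\Gamma$, and $\det(B_i|_S)=0$ otherwise. For the second case, if $S$ is not a spanning tree then, having $|V|-1$ edges, it must contain a cycle; summing the corresponding columns of $B_i$ with signs dictated by the cyclic orientation produces a vanishing linear combination, so the determinant is zero. For the first case, I would argue by induction on $|V|$: any tree on at least two vertices has a leaf $v\neq i$, and expanding $\det(B_i|_S)$ along the row of $v$ reduces the computation to the spanning-tree determinant of the graph with $v$ and its unique incident edge deleted, where the induction hypothesis applies.

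Combining these ingredients, the Cauchy-Binet sum collapses to a sum over spanning trees $T$ of $\Gamma$ with each term equal to $\prod_{e\in E_T}\lambda_e=w(T)$, yielding
\[
\det\bigl(\L(\Gamma;i)\bigr)=\sum_{T\subset\Gamma\ \mathrm{spanning\ tree}}w(T)=\T(\Gamma),
\]
as desired. Note that the independence of the choice of deleted index $i$ is automatic from this formula, since the right-hand side does not involve $i$.
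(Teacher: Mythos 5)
Your proof is correct and complete. The paper itself offers no proof of this statement --- it is quoted verbatim from Tutte's book --- so there is nothing internal to compare against; what you have written is the standard Cauchy--Binet proof of the weighted Matrix--Tree theorem, and every step checks out: the factorization $\L(\Gamma)=BDB^{T}$, the identification $\L(\Gamma;i)=B_iDB_i^{T}$, the Cauchy--Binet expansion over $(|V|-1)$-element edge subsets, and the dichotomy $\det(B_i|_S)=\pm1$ for spanning trees versus $\det(B_i|_S)=0$ otherwise (where you correctly use the edge count $|S|=|V|-1$ to conclude that a non-tree must contain a cycle, and a leaf-peeling induction for the tree case). One point worth making explicit, since the paper later applies the theorem with negative and fractional weights such as $-1/2$ and $-1/4$: your argument is insensitive to the sign of the $\lambda_e$ because you apply Cauchy--Binet to the pair $B_iD$ and $B_i^{T}$ rather than to a symmetric factorization $(B_iD^{1/2})(B_iD^{1/2})^{T}$, which would require nonnegative weights. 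Tutte's own treatment proceeds by a direct combinatorial expansion of the determinant (essentially deletion--contraction, the identity the paper records separately as Theorem~\ref{thm:DCT}); your route is at least as clean and arguably more self-contained.
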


\begin{thm}\cite[Theorem 6.1.1 (Gershgorin's Theorem)]{HJ85}\label{thm:gershgorin}
Let $A=(a_{i,j})$ be a $n\times n$ square complex matrix, and denote by $R_i(A)=\sum_{j\neq i}|a_{i,j}|$.
Then the eigenvalues of $A$ are contained in the following set
$$
\bigcup_{i}\{z\in\mathbb{C}\;|\;|z-a_{i,i}|\leq R_i(A)\}.
$$
\end{thm}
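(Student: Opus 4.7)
The plan is to prove Gershgorin's theorem by the classical ``maximum-component'' argument: given any eigenvalue $\lambda$ of $A$, I will show that $\lambda$ must lie in the closed disk centered at $a_{ii}$ of radius $R_i(A)$ for the index $i$ corresponding to the largest-modulus entry of an associated eigenvector. This forces $\lambda$ into the union of all such disks, which is exactly the conclusion.

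More concretely, first I would fix an eigenvalue $\lambda$ of $A$ and choose a nonzero eigenvector $x=(x_1,\dots,x_n)^T$ with $Ax=\lambda x$. Since $x\neq 0$, I can pick an index $i$ such that $|x_i|=\max_{1\leq j\leq n}|x_j|>0$. Reading off the $i$-th row of the equation $Ax=\lambda x$ gives
\[
\sum_{j=1}^n a_{ij}x_j = \lambda x_i,
\]
which I would rearrange as
\[
(\lambda-a_{ii})x_i = \sum_{j\neq i}a_{ij}x_j.
\]

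Next, I would take absolute values, divide by $|x_i|>0$, and apply the triangle inequality together with the maximality $|x_j|/|x_i|\leq 1$ for every $j$, obtaining
\[
|\lambda-a_{ii}| \;=\; \left|\sum_{j\neq i}a_{ij}\,\frac{x_j}{x_i}\right| \;\leq\; \sum_{j\neq i}|a_{ij}|\cdot\frac{|x_j|}{|x_i|} \;\leq\; \sum_{j\neq i}|a_{ij}| \;=\; R_i(A).
\]
Hence $\lambda\in\{z\in\mathbb{C}\;|\;|z-a_{ii}|\leq R_i(A)\}$, which places $\lambda$ inside the union over all $i$ of the Gershgorin disks, as desired.

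There is essentially no obstacle: the argument is a one-line triangle-inequality estimate, and the only subtlety worth flagging is the need to pick the index $i$ where $|x_j|$ is maximized so that the ratios $|x_j/x_i|$ are bounded by $1$. Since the paper only invokes this result as a black box to localize eigenvalues of a Laplacian-type matrix (in combination with Theorem~\ref{thm:matrix_tree}), no stronger or quantitative form is needed, and the standard proof above suffices.
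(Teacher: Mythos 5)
Your proof is correct and is precisely the standard argument for Gershgorin's theorem (choose the maximal-modulus component of an eigenvector and apply the triangle inequality to that row); the paper itself gives no proof, citing \cite{HJ85}, where the same argument appears. Nothing further is needed.
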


\begin{defn}\label{def:dom_diag}
Let $A=(a_{i,j})$ be as above. We say that $A$ is \emph{dominant diagonal} if for every $1\leq i\leq n$ we have
$$
|a_{i,i}|\geq R_i(A).
$$
Moreover, if there exist $i$ such that $|a_{i,i}|>R_i(A)$ then we say that $A$ is \emph{strongly dominant diagonal}.
\end{defn}

\begin{cor}\label{cor:pos_def}
If $A=(a_{i,j})$ is a symmetric matrix and strongly dominant diagonal matrix with positive entries on the diagonal then $A$ is positive definite.
\end{cor}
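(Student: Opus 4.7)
The plan is a direct application of Gershgorin's theorem (Theorem~\ref{thm:gershgorin}). Since $A$ is real and symmetric, all its eigenvalues are real, and each eigenvalue $\lambda$ of $A$ must lie in some Gershgorin disk, which for a real eigenvalue is simply an interval of the form $[a_{ii}-R_i(A),\,a_{ii}+R_i(A)]$ for some index $i$.

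I would then combine the dominant diagonal hypothesis with positivity of the diagonal entries to obtain $a_{ii}-R_i(A)\geq 0$ for every $i$, which immediately forces every eigenvalue to be nonnegative and hence $A$ to be positive semidefinite. To upgrade this to positive definiteness, I would invoke the strong dominance: at the distinguished index $i_0$ we have $a_{i_0 i_0}>R_{i_0}(A)$, so the $i_0$-th Gershgorin interval is strictly contained in $(0,\infty)$.

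The delicate point, and where I expect the main work to lie, is excluding $0$ as an eigenvalue of $A$ globally rather than just from the $i_0$-th Gershgorin interval, since pure Gershgorin with a single strict row a priori only rules out $0$ from that row's interval. In the applications of this corollary (see Section~\ref{sec:independence}), the matrix $A$ is a reduced Laplacian of a connected weighted graph (cf.\ Theorem~\ref{thm:matrix_tree}), hence irreducible, and a standard irreducibility argument on the equation $Av=0$ (picking an index $k$ maximizing $|v_k|$, using weak dominance to propagate equality through nonzero off-diagonal entries along the graph, and reaching a contradiction at $i_0$) rules out $0$ as an eigenvalue. Once this is handled, all eigenvalues of $A$ are strictly positive and symmetry gives that $A$ is positive definite.
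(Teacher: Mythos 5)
The paper offers no proof of this corollary at all --- it is stated as an immediate consequence of Theorem~\ref{thm:gershgorin} --- so your first two steps (real symmetric implies real spectrum, Gershgorin intervals $[a_{ii}-R_i(A),\,a_{ii}+R_i(A)]\subset[0,\infty)$) are exactly the intended argument, and the ``delicate point'' you isolate is a genuine one that the paper glosses over. With Definition~\ref{def:dom_diag} as written, where strictness is required in only \emph{one} row, the corollary is literally false: the matrix
\[
A=\begin{pmatrix}2&0&0\\0&1&1\\0&1&1\end{pmatrix}
\]
is symmetric, has positive diagonal, is dominant diagonal with strict inequality in the first row, and is singular. So some extra hypothesis is needed to expel $0$ from the spectrum, exactly as you say, and your Taussky-style repair via a maximal-coordinate argument on $Av=0$ is the standard and correct one. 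The statement can alternatively be saved verbatim by reading ``strongly dominant diagonal'' as strictness in \emph{every} row, in which case Gershgorin alone gives $\lambda\geq a_{ii}-R_i(A)>0$ and no further work is needed; that is probably the intended reading, but it is not what Definition~\ref{def:dom_diag} says, and it is not satisfied by the matrices actually used later (e.g.\ in Lemma~\ref{lem:det_int} only some rows are strict).

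One small correction to your repair: the reduced Laplacian $\L(\Gamma;i)$ of a connected weighted graph need \emph{not} be irreducible --- deleting the vertex $i$ can disconnect the graph (take a path and delete its middle vertex). What is true, and what your propagation argument actually needs, is that every irreducible diagonal block of $\L(\Gamma;i)$ contains at least one strictly dominant row, because connectivity of $\Gamma$ forces every component of $\Gamma\setminus\{i\}$ to contain a vertex adjacent to $i$, and the strict rows of $\L(\Gamma;i)$ are precisely the neighbours of $i$ (the slack $a_{jj}-R_j(A)$ equals $\lambda_{ji}$). So you should run your maximal-coordinate argument within the block containing the maximizing index and derive the contradiction at a strict row of \emph{that} block, rather than at a single global $i_0$. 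With that adjustment your proof is complete and covers all the uses of the corollary in Section~\ref{sec:independence}.
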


\begin{exmp}
Let $\Gamma=(V,E)$ be a connected and weighted graph such that $\lambda_e>0$ for every $e\in E$. Then for every $i\in V$ the matrix $\L(\Gamma;i)$ is strongly dominant diagonal and all the diagonal entries are positive, hence $\L(\Gamma;i)$ is positive definite.
\end{exmp}

\begin{defn}
Let $\Gamma=(V,E)$ be a weighted graph. Given an edge $e\in E$ we denote by $\Gamma\setminus e=(V,E\setminus{e})$ the weighted graph obtained from $\Gamma$ by \emph{edge deletion} of $e$.
Let $i,j$ be the endpoints of $e$. We define $\Gamma/e=(\overline{V},\overline{E})$ as the graph obtained from $\Gamma$ by \emph{edge contraction} of $e$, where
\begin{itemize}
    \item $\overline{V}=V/{i\sim j}$,
    \item if $e\in E$ has endpoints $h,k\in V\setminus\{i,j\}$ then $e\in\overline{E}$ with the same label,
    \item for every $h\in V\setminus\{i,j\}$ there is an edge $e\in\overline{E}$ joining $[i\sim j]$ to $h$ of weight $\lambda_e=\lambda_{ih}+\lambda_{jh}$.
\end{itemize}
\end{defn}

\begin{thm}[\cite{tutte2004graph}]\label{thm:DCT}
Let $\Gamma=(V,E)$ be a weighted graph. Then for every $e\in E$, we have
$$
\T(\Gamma)=\T(\Gamma\setminus e)+\lambda_e\cdot\T(\Gamma/e).
$$
\end{thm}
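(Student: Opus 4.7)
The plan is to establish the identity by partitioning the spanning trees of $\Gamma$ into two disjoint classes according to whether they contain the distinguished edge $e$ or not, and matching each class with one of the two summands on the right-hand side. Writing
$$\T(\Gamma) \;=\; \sum_{T \ni e} w(T) \;+\; \sum_{T \not\ni e} w(T),$$
I aim to show that the first sum equals $\lambda_e \cdot \T(\Gamma/e)$ and the second equals $\T(\Gamma\setminus e)$.

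The case $e \notin E(T)$ is immediate: the spanning trees of $\Gamma$ that avoid $e$ are, as edge sets, precisely the spanning trees of $\Gamma \setminus e$, and this correspondence preserves the weight. Hence this class contributes exactly $\T(\Gamma \setminus e)$.

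For the class $\{T : e \in E(T)\}$, I would use the map $T \mapsto T/e$, sending a spanning tree $T$ of $\Gamma$ containing $e$ to the subgraph of $\Gamma/e$ obtained by contracting $e$; standard tree arguments show that $T/e$ is a spanning tree of $\Gamma/e$. The main subtlety, and the only step that requires care, is that this map need not be injective. If a vertex $h$ is adjacent in $\Gamma$ to both endpoints $i,j$ of $e$ via edges $e_1 = ih$ and $e_2 = jh$, these merge in $\Gamma/e$ into a single edge of weight $\lambda_{e_1} + \lambda_{e_2}$; on the tree side, $T$ can contain at most one of $e_1,e_2$ (otherwise $T \cup \{e\}$ would contain a cycle), so distinct preimages can collapse to the same $T/e$. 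However, for each fixed spanning tree $T'$ of $\Gamma/e$, fully expanding the product $w(T') = \prod_{e' \in E(T')}\lambda_{e'}$ by distributing every factor of the form $\lambda_{e_1} + \lambda_{e_2}$ recovers exactly the sum of $\prod_{e' \in E(T) \setminus \{e\}}\lambda_{e'}$ over all $T$ in the preimage. Multiplying by the common factor $\lambda_e$ from the contracted edge and summing over all $T'$ then gives $\lambda_e \cdot \T(\Gamma/e)$.

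Adding the two contributions yields the claimed identity. The only substantive obstacle is the bookkeeping in the contraction step, but this is designed away: the weight convention $\lambda_{[i\sim j]h} = \lambda_{ih} + \lambda_{jh}$ used in the definition of $\Gamma/e$ is tailored precisely to absorb the multiplicity arising from the parallel edges created by the contraction.
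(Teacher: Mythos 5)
Your argument is correct. Note that the paper does not prove this statement at all: it is quoted from Tutte's book with a citation and used as a black box, so there is no in-paper proof to compare against. Your partition of the spanning trees of $\Gamma$ according to whether they contain $e$, together with the observation that the additive weight convention $\lambda_{[i\sim j]h}=\lambda_{ih}+\lambda_{jh}$ in the definition of $\Gamma/e$ is exactly what absorbs the non-injectivity of $T\mapsto T/e$, is the standard deletion--contraction proof of the weighted tree count. The only point you leave implicit is that every monomial obtained by distributing the factors $\lambda_{ih}+\lambda_{jh}$ in $w(T')$ genuinely corresponds to a spanning tree of $\Gamma$ containing $e$: the lifted edge set has $|V|-1$ edges and is connected (lift a path in $T'$ from any vertex to $[i\sim j]$ and close it up through $e$), hence is a tree; choices that select a non-edge carry weight $0$ under the paper's convention and contribute nothing. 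With that remark the correspondence between pairs (spanning tree of $\Gamma/e$, choice of term for each merged edge) and spanning trees of $\Gamma$ containing $e$ is a weight-preserving bijection up to the factor $\lambda_e$, and the identity follows.
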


\subsubsection{Gordon-Litherland Form}\label{sec:GL_form}

Let $L\subset S^3$ be a link and let $D\subset S^2$ be a connected diagram for $L$. Recall that by coloring $S^2\setminus D$ in a checkerboard fashion, we determine two spanning surfaces for $L$. Denote by $F$ the surface given by the black coloring. 

Then, we can describe the \emph{Gordon-Litherland form} conveniently in terms of a weighted graph $\Gamma=(V,E)$ associated with $F$, as follows. See \cite{GL78} for more details. Let $V$ be the set of white regions of $D$. Given two white regions $R_1,R_2$, we have an edge $e$ connecting them if they share at least one crossing of $D$. The label of $e$ is given by the sum over all the crossing of $D$ shared by $R_1$ and $R_2$ of $+1$ for a right-handed half-twist and $-1$ for a left-handed half-twist (see Figure \ref{fig:goeritz_crossings}).

\begin{figure}[ht]
    \centering
\begin{tikzpicture}
\node[anchor=south west,inner sep=0] at (0,0){\includegraphics[scale=0.7]{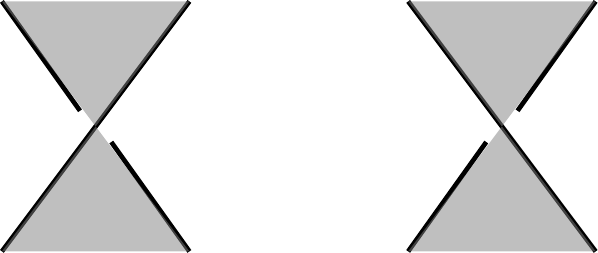}};
\node at (1.2,-0.5) (b1){$+1$};
\node at (0.5,1.5) (b3){$R_1$};
\node at (1.7,1.5) (b4){$R_2$};
\node at (5.3,1.5) (b37){$R_1$};
\node at (6.5,1.5) (b43){$R_2$};
\node at (6,-0.5) (b2){$-1$};

\end{tikzpicture}
    \caption{Sign convention for the crossings.}
    \label{fig:goeritz_crossings}
\end{figure}
Then, for any $R\in V$ the matrix $\L(\Gamma;R)$ represents the Gordon-Litherland form of $F$.
In particular, the determinant of the link $L$ is given by the absolute value of $\det(\L(\Gamma;R))=\T(\Gamma)$.

\subsection{Butterfly and Moth Links}\label{ssec:buttefly_moth}
We are now going to recall the definition of \emph{$n$-butterfly link} that is defined by the first author in \cite{DP23} as a generalization of the \emph{butterfly link} introduced by Boyle and Issa in \cite{BI22}.

\begin{defn}\label{butterfly_link}

Let $(K,\rho,h)$ be a DSI knot. Take a $\rho$-invariant band $B$, containing the half-axis $h$, which attaches to $K$ at the two fixed points. Performing a band move on $K$ along $B$ produces a $2$-component link, which has a natural semi-orientation induced by the unique semi-orientation on $K$. The linking number between the components of the link depends on the number of twists of the band $B$. Then, the \emph{$n$-butterfly link} $L_b^n(K)$, is the $2$-component $2$-periodic link (i.e. the involution $\rho$ exchanges its components) obtained from such a band move on $K$, so that the linking number between its components is $n$.
\end{defn}

\begin{rem}
In order to avoid confusion, we want to remark that the definition of $L_b^n(K)$ given in Definition \ref{butterfly_link} above actually coincide with the definition of $\widehat{L}_b^{-n}(K)$ given in \cite[Definition 1.9]{DP23}. We choose to use a different notation to improve readability.
\end{rem}

In \cite{DPF23b}, Framba and the first author associate with a DSI knot the so called \emph{moth link} which we are now going to recall (see \cite[Definition 5.2]{DPF23b} for details).

\begin{defn}\label{def:moth_link}
Let $(K,\rho,h)$ be a DSI knot and let $B$ be the invariant band giving the $0$-butterfly link $L_b^0(K)$, as in Definition \ref{butterfly_link}. Observe that we can undo the band move on $B$ by attaching another invariant band $B^*$ on $L_b^0(K)$. Then we define the \emph{moth link} $L_m(K)$ as the \emph{strong fusion} (see \cite{Kai92}) of $L_b^0(K)$ along the band $B^*$.
\end{defn}

Now, the \emph{moth polynomial} of $(K,\rho,h)$ is defined as the Kojima-Yamasaki eta-function (see \cite{KY79}) of the moth link of $K$.

\begin{prop}\cite[Proposition 5.6]{DPF23b}
The moth polynomial induces a group homomorphism
\begin{align*}
 \eta_m : \ & \CT \to \Q(t) \\
 & K \mapsto \eta(L_m(K))(t).
\end{align*}
\end{prop}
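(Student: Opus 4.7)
The plan is to establish two properties for the map $K\mapsto\eta(L_m(K))(t)$: well-definedness on equivariant concordance classes and additivity under the equivariant connected sum $\widetilde{\#}$. Both reductions exploit the fact that the moth link construction is natural with respect to equivariant cobordism and cut-and-paste operations, so that the classical concordance invariance and additivity of the Kojima--Yamasaki eta function \cite{KY79} can be applied componentwise.

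For well-definedness, suppose $(K_0,\rho_0,h_0)$ and $(K_1,\rho_1,h_1)$ are equivariantly concordant via an annulus $A$ in a $4$-manifold $W$ with involution $\rho_W$ whose fixed-point surface $F$ contains the half-axes $h_0$ and $h_1$. First I would extend the invariant bands $B_0,B_1$ used to construct $L_b^0(K_0)$ and $L_b^0(K_1)$ to a single $\rho_W$-invariant band inside $W$, attached to $A$ along $F\cap A$; performing the corresponding band move on $A$ yields an equivariant cobordism between the two $0$-butterfly links, namely two annuli exchanged by $\rho_W$. Extending the unfusion bands $B_0^*$ and $B_1^*$ through $W$ in the same manner then produces a concordance between the moth links $L_m(K_0)$ and $L_m(K_1)$, and the concordance invariance of the Kojima--Yamasaki eta function delivers $\eta_m(K_0)=\eta_m(K_1)$.

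For additivity, consider $K=K_0\widetilde{\#}K_1$. The equivariant connected sum is performed by cutting at the fixed points along an invariant ball pair and regluing with the half-axes aligned head-to-tail. Consequently, the invariant bands realizing $L_b^0(K)$ can be chosen as the union of the bands for $K_0$ and $K_1$ matched across the connect sum region, and the same holds for the unfusion bands $B^*$. This identifies $L_m(K)$ with a component-wise connected sum of $L_m(K_0)$ and $L_m(K_1)$ (the involutive symmetry ensures corresponding components are paired), and the standard additivity of the eta function under such connected sum of $2$-component links yields $\eta_m(K_0\widetilde{\#}K_1)=\eta_m(K_0)+\eta_m(K_1)$.

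The main technical obstacle is the equivariant extension of the unfusion bands $B^*$ through $W$: these must be extended $\rho_W$-invariantly while remaining transverse to $A$ and to each other, and the resulting cobordism of moth links must be a genuine concordance rather than a higher-genus surface. An equivariant transversality argument handles the first point, relying on the hypothesis that the half-axes $h_0,h_1$ lie in the same component of $F\setminus A$ with compatible orientations, so that $F$ furnishes the space in which the invariant bands can be homotoped; the second point follows by choosing the band extensions to be products over the concordance parameter in a collar neighborhood, which can then be spread over $W$ using the uniqueness (up to isotopy) of such invariant bands.
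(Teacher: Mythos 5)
The paper itself does not prove this proposition; it is imported verbatim from \cite[Proposition~5.6]{DPF23b}, so there is no in-paper argument to compare yours against, and your attempt has to be judged on its own terms. Your skeleton (invariance under equivariant concordance plus additivity under $\widetilde{\#}$) is the natural one, and the invariance half is sound in outline: for $\CT$ the ambient cobordism is $S^3\times[0,1]$, the fixed surface of $\rho_W$ is an annulus containing the two half-axes, and the invariant band $B$ (together with its dual band $B^*$, which lives in a small neighbourhood of $B$) extends through the concordance exactly as in \cite[Proposition~4.2]{BI22} and \cite[Proposition~5.4]{DPF23}; the meridian of the extended band then sweeps out the annulus between the two small components of the moth links. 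Note that this is precisely the step which, as the remark in {\sc\S}\ref{sec:equivariant-concordance} points out, fails over $\Q$ because the fixed surface need not be an annulus, so you should state explicitly that you are using the integral setting here.

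The additivity half has a genuine gap. You assert that $L_m(K_0\widetilde{\#}K_1)$ \emph{is} a component-wise connected sum of $L_m(K_0)$ and $L_m(K_1)$. But $L_m(K)$ consists of $K$ together with a \emph{single} meridian of the \emph{single} unfusion band $B^*$, whereas a component-wise connected sum of the two moth links would require a choice of arc joining the two meridians $\mu_0$ and $\mu_1$ and is not a canonically defined link; nothing in your argument identifies the one small component of $L_m(K_0\widetilde{\#}K_1)$ with $\mu_0\#\mu_1$. Likewise, $L_b^0(K_0\widetilde{\#}K_1)$ is obtained from the two butterfly links by a pair of equivariant fusions across the connect-sum sphere, not by a split union followed by a genuine connected sum, so the component-pairing you appeal to is not automatic. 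Finally, the ``standard additivity of the eta function under connected sum of $2$-component links'' is invoked without checking that the hypotheses of the Kojima--Yamasaki additivity statement match the operation actually performed. A cleaner route, suggested by Proposition~\ref{prop:eta_conway}, is to prove the Conway-polynomial identity $\nabla_{L_b^0(K_0\widetilde{\#}K_1)}(z)=\nabla_{L_b^0(K_0)}(z)\,\nabla_{K_1}(z)+\nabla_{K_0}(z)\,\nabla_{L_b^0(K_1)}(z)$ by a skein argument at the site where the two butterfly links are fused; combined with $\nabla_{K_0\#K_1}=\nabla_{K_0}\nabla_{K_1}$ this yields additivity of $\eta_m(K)=\nabla_{L_b^0(K)}/(z\nabla_K)$ directly and bypasses the link-level identification entirely.
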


\begin{prop}\cite[Proposition 5.7]{DPF23b}\label{prop:eta_conway}
    The moth polynomial of a DSI knot $K$ can be computed by the following formula:
    $$
\eta_m(K)(t)=\frac{\nabla_{L_b^0(K)}(z)}{z\nabla_K(z)},
$$
    where $\nabla_L(z)$ is the Conway polynomial of a (semi)-oriented link $L$ and $z=i(2-t-t^{-1})^{1/2}$.
\end{prop}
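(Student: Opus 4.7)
The plan is to combine two ingredients: a classical identity expressing the Kojima--Yamasaki $\eta$-function of a two-component link of vanishing linking number as a ratio of Conway polynomials, and the invariance of the Conway polynomial under strong fusion.

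By Definition~\ref{def:moth_link}, the moth link has the form $L_m(K) = K \cup \mu$, where $\mu$ is a meridian of the band $B^*$ used to perform the strong fusion. I would first verify two geometric facts needed below: $\mathrm{lk}(K, \mu) = 0$ and $\mu$ is unknotted, both of which follow from $\mu$ bounding a small meridional disk of $B^*$ that, after the band move, intersects $K$ transversally in exactly two points of opposite sign. In particular, $\nabla_\mu(z) = 1$.

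The first key input is the Hoste--Jin type identity: for a two-component link $L = L_1 \cup L_2$ with $\mathrm{lk}(L_1, L_2) = 0$, one has
\[
\eta(L)(t) \;=\; \frac{\nabla_L(z)}{z\,\nabla_{L_1}(z)\,\nabla_{L_2}(z)},
\]
where $z = i(2 - t - t^{-1})^{1/2}$. A self-contained proof proceeds from the Torres condition for the multivariable Alexander polynomial $\Delta_L(t_1, t_2)$: specializing to $t_1 = t_2 = t$ and using the vanishing of the linking number yields a factorization
\[
\Delta_L(t) \;=\; (t^{1/2} - t^{-1/2})\,\Delta_{L_1}(t)\,\Delta_{L_2}(t)\,\eta(L)(t)
\]
up to units, and the substitution $z = t^{1/2} - t^{-1/2}$ converts $\Delta$ into $\nabla$. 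Applied to $L_m(K)$, with $\nabla_\mu(z) = 1$, this gives $\eta_m(K)(t) = \nabla_{L_m(K)}(z)/(z\,\nabla_K(z))$.

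The final step is the identification $\nabla_{L_m(K)}(z) = \nabla_{L_b^0(K)}(z)$, which I expect to be the main obstacle. This is an instance of Kaiser's theorem \cite{Kai92} that strong fusion preserves the Conway polynomial. The cleanest route is via compatible Seifert surfaces: starting from a Seifert surface $F$ for $L_b^0(K)$ that contains $B^*$ as a subband, attaching $B^*$ produces a Seifert surface for the fused knot $K$, and capping off with a Seifert surface for $\mu$ in the complement (which exists precisely because $\mathrm{lk}(\mu, K) = 0$) yields a Seifert surface $F'$ for $L_m(K)$. A direct computation then shows that the Seifert matrix of $F'$ differs from that of $F$ only by elementary $S$-equivalence moves, so the Conway polynomial is unchanged. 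The technical care lies in arranging $B^*$ as an honest subband of $F$ compatibly with the meridional disk of $\mu$, which is handled by choosing the isotopy class of $F$ and the framing of the band appropriately. Combining the two identities yields the desired formula.
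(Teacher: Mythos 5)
First, a point of comparison: the paper does not prove this proposition at all --- it is imported verbatim from \cite[Proposition 5.7]{DPF23b} --- so your argument has to be judged on its own terms rather than against an in-paper proof. Your overall architecture is the right one and matches how this formula is established in the literature: the moth link is $K\cup\mu$ with $\mu$ an unknotted meridian of the fusion band satisfying $\operatorname{lk}(K,\mu)=0$ (so $\nabla_\mu=1$ and the Kojima--Yamasaki function \cite{KY79} is defined); the $\eta$-function of a two-component link of linking number zero is the stated ratio of Conway polynomials; and Kaiser's theorem \cite{Kai92} gives $\nabla_{L_m(K)}=\nabla_{L_b^0(K)}$. Combining these does yield the formula, and your Whitehead-link-style bookkeeping of the single factor of $z$ in the denominator is consistent.

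The genuine gap is your purported self-contained derivation of the identity $\eta(L)=\nabla_L/(z\,\nabla_{L_1}\nabla_{L_2})$ ``from the Torres condition.'' The Torres conditions constrain the specialization $\Delta_L(t_1,1)$ in terms of $\Delta_{L_1}(t_1)$ and the linking number; they say nothing about the diagonal specialization $t_1=t_2=t$ beyond its relation to the one-variable polynomial, and they do not involve the $\eta$-function at all, so they cannot produce a factorization in which $\eta(L)(t)$ appears. That identity is a theorem of Jin, and its proof requires genuinely different input: one computes the class of the lifted longitude of $L_2$ in the rational Alexander module of $S^3\setminus L_1$ (equivalently, a Seifert-surface or C-complex computation), which is where the denominator $\nabla_{L_1}\nabla_{L_2}$ actually comes from. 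Cite Jin's theorem rather than deriving it from Torres. Two smaller cautions: (i) $\eta$ is an honest rational function, not defined only up to units, so the ``up to units'' in your factorization must be pinned down before asserting the exact equality in the statement; (ii) your Seifert-matrix sketch of Kaiser's theorem assumes $B^*$ can be arranged as a subband of a Seifert surface for $L_b^0(K)$, which is not automatic for an arbitrary fusion band --- this is precisely the situation the meridian component is designed to handle, so lean on \cite{Kai92} rather than the surface argument. With Jin's theorem cited in place of the Torres step, the proof is correct.
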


\begin{rem}
In the following, we will regard the moth polynomial as a group homomorphism
$$
\eta_m: \CT \to \Q(z)
$$
defined by the formula in Proposition \ref{prop:eta_conway}.
\end{rem}

\begin{lem}\label{lemma:skein_conway}
Let $K$ be a DSI knot. Then for any $p,q\in\Z$
$$
\nabla_{K}(z)|\nabla_{L_b^p(K)}(z)\iff\nabla_{K}(z)|\nabla_{L_b^q(K)}(z).$$
\end{lem}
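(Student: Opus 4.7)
The plan is to set up a Conway-skein relation between consecutive butterfly links and then telescope it from $q$ to $p$.

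First, I would observe that $L_b^{n+1}(K)$ is obtained from $L_b^n(K)$ by inserting one extra full twist into the band $B$, which contributes two crossings of the same sign in the twist region. Switching the sign of one of these two crossings produces a pair of adjacent opposite-sign crossings that cancel by a Reidemeister~II move, returning the diagram to $L_b^n(K)$. Thus $L_b^{n+1}(K)$ and $L_b^n(K)$ differ by a single crossing change, and applying the Conway skein relation $\nabla_{L_+}-\nabla_{L_-}=z\,\nabla_{L_0}$ at that crossing gives
\[
\nabla_{L_b^{n+1}(K)}(z) - \nabla_{L_b^n(K)}(z) \;=\; \pm z\,\nabla_{M_n}(z),
\]
where $M_n$ is the one-component knot produced by the oriented smoothing (the crossing being between the two components of the butterfly link, the smoothing merges them into a single knot).

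Telescoping the above over $n$ between $q$ and $p$ yields
\[
\nabla_{L_b^p(K)}(z) - \nabla_{L_b^q(K)}(z) \;=\; z\sum_{i=q}^{p-1}\pm\,\nabla_{M_i}(z),
\]
and since $\gcd(\nabla_K(z),z)=1$ (the Conway polynomial of a knot has constant term $1$), the lemma reduces to the claim that $\nabla_K(z)$ divides $\nabla_{M_n}(z)$ for every intermediate index $n$.

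The key step, and the main obstacle, is verifying this last divisibility. Geometrically $M_n$ is the band sum of $K$ with itself along a small band dual to the twist of $B$, the two copies of $K$ coming from the two components of $L_b^{n+1}(K)$, which are interchanged by the DSI involution $\rho$. I would identify $M_n$ as a satellite of $K$ (companion $K$, pattern depending on $n$ and on the band $B$) and combine the cabling/satellite formula with the $\rho$-symmetry inherited by $M_n$ to force $\nabla_K(z)$ as a factor of $\nabla_{M_n}(z)$. If a direct identification of each $M_n$ proves awkward, one can instead run a secondary induction on $|p-q|$ by applying the same skein manoeuvre to the knots $M_n$ themselves; the recursion terminates and the bookkeeping, though heavier, follows the same skeleton.
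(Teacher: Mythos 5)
Your skein set-up is the right one --- this is exactly how the paper proceeds --- but you stop short at the point that actually makes the lemma work, and the route you sketch for closing the gap is based on a misidentification. The oriented resolution $M_n$ of the crossing in the twist region is not a band sum of two copies of $K$, nor a satellite of $K$ with some pattern: it is $K$ itself. The two components of $L_b^{n}(K)$ are not copies of $K$ (they are the two arcs into which the band move along $B$ splits $K$, exchanged by $\rho$), and smoothing one crossing of the twist region simply reattaches them, undoing the band move. So the skein relation reads
\[
\nabla_{L_b^{p+1}(K)}(z)=\nabla_{L_b^p(K)}(z)+z\,\nabla_K(z),
\]
and the equivalence of the two divisibilities is immediate, with no telescoping sum of unknown terms and no need for the observation that $\gcd(\nabla_K(z),z)=1$.

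As written, your argument has a genuine gap: you explicitly defer the divisibility $\nabla_K(z)\mid\nabla_{M_n}(z)$ to a satellite/cabling computation that you do not carry out, and which would not go through in the form you describe because $M_n$ is not the object you take it to be. The fallback ``secondary induction on the $M_n$ themselves'' is too vague to assess and is in any case unnecessary. The fix is a single picture: draw the twist region of the band $B$ with $p$ full twists plus one crossing, and check that the three terms of the Conway skein triple at that crossing are $L_b^{p+1}(K)$, $L_b^p(K)$, and $K$.
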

\begin{proof}
Let $p\in\Z$ be any integer. It is sufficient to prove that $$\nabla_{K}(z)|\nabla_{L_b^p(K)}(z)\iff\nabla_{K}(z)|\nabla_{L_b^{p+1}(K)}(z) .$$
In order to do so we can apply the skein relation for the Conway polynomial as indicated in Figure \ref{fig:skein_butterfly} to obtain
$$
\nabla_{L_b^{p+1}(K)}(z)=\nabla_{L_b^p(K)}(z)+z\nabla_K(z).
$$
\begin{figure}[ht]
    \centering
\begin{tikzpicture}
\node[anchor=south west,inner sep=0] at (0,0){\includegraphics[scale=0.5]{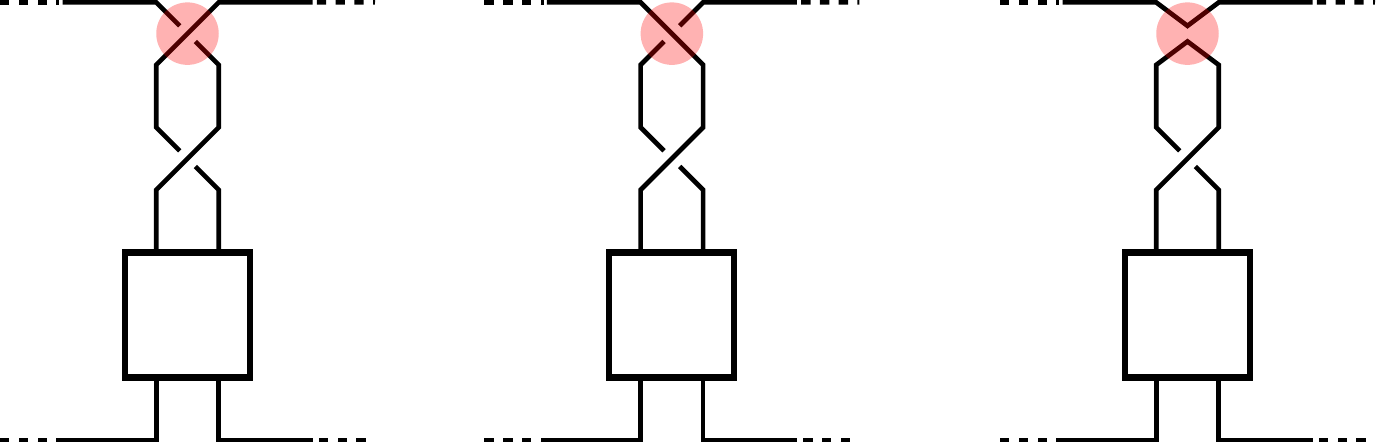}};
\node[scale=1.7] at (1.6,1) (b4){$p$};
\node[scale=1.7] at (5.7,1) (b37){$p$};
\node[scale=1.7] at (10.1,1) (b7){$p$};
\node[scale=1.3] at (1.6,-0.5) (a){$L^{p+1}_b(K)$};
\node[scale=1.3] at (5.7,-0.5) (b){$L^{p}_b(K)$};
\node[scale=1.3] at (10.1,-0.5) (c){$K$};

\end{tikzpicture}
    \caption{The three terms appearing in the skein relation. The box denotes $p$ full twists.}
    \label{fig:skein_butterfly}
\end{figure}
\end{proof}
\begin{cor}\label{cor:eta_determinant}
Let $K$ be a DSI knot and let $\eta_m(K)(z)=f(z)/g(z)$, where $f(z),g(z)\in\Z[z]$ are coprime polynomials. Suppose that for some $p\in\Z$ we have that $\det(K)$ does not divide $\det(L_b^p(K))$. Then $\deg g(z)>0$ and $g(z)|\nabla_K(z)$.
\end{cor}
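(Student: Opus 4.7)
My approach is to leverage the explicit formula for the moth polynomial provided by Proposition~\ref{prop:eta_conway}, combined with the skein recurrence from the proof of Lemma~\ref{lemma:skein_conway}. Since $L_b^0(K)$ is a $2$-component link, its Conway polynomial lies in $z \cdot \Z[z^2]$, so I can write $\nabla_{L_b^0(K)}(z) = z \cdot P(z)$ with $P(z) \in \Z[z^2]$. The formula then rewrites as $\eta_m(K)(z) = P(z)/\nabla_K(z)$, a ratio of polynomials in $\Z[z^2]$. Reducing to lowest terms $f(z)/g(z)$ with $\gcd(f,g)=1$, the denominator $g(z)$ must divide $\nabla_K(z)$, and this already gives the second conclusion.

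For the strict inequality $\deg g > 0$, I would argue by contradiction. If $\deg g = 0$, then $\eta_m(K)(z)$ is, up to a rational scalar, a polynomial; equivalently, $\nabla_K(z) \mid P(z)$ and hence $\nabla_K(z) \mid \nabla_{L_b^0(K)}(z)$. Since $\nabla_K(z)$ is primitive (as $\nabla_K(0) = 1$), Gauss's lemma promotes this to a divisibility in $\Z[z]$. Applying the skein identity $\nabla_{L_b^{p+1}(K)}(z) = \nabla_{L_b^p(K)}(z) + z \nabla_K(z)$ established in the proof of Lemma~\ref{lemma:skein_conway} and iterating yields $\nabla_{L_b^p(K)}(z) = \nabla_{L_b^0(K)}(z) + p \cdot z \nabla_K(z)$, so $\nabla_K(z) \mid \nabla_{L_b^p(K)}(z)$ in $\Z[z]$ for every $p \in \Z$.

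The final step is to specialize at $z = 2i$, which corresponds to $t = -1$ under the substitution $z = i(2 - t - t^{-1})^{1/2}$. Using the identities $\det(K) = |\nabla_K(2i)|$ and $\det(L_b^p(K)) = |\nabla_{L_b^p(K)}(2i)|$, the polynomial divisibility evaluates to a divisibility in $\Z[i]$ which descends to $\det(K) \mid \det(L_b^p(K))$ as nonnegative integers, contradicting the hypothesis for the bad value of $p$. The main subtlety I foresee lies in this last translation: one has to check that divisibility of absolute values in $\Z$ really follows from the $\Z[i]$-divisibility at $z = 2i$, which works because $\nabla_K(2i)$ is a real integer while $\nabla_{L_b^p(K)}(2i) \in i \Z$ (as $L_b^p(K)$ has two components), so writing $\nabla_{L_b^p(K)}(2i) = \nabla_K(2i) \cdot h(2i)$ with $h \in \Z[z]$ and taking moduli gives exactly the required integer divisibility.
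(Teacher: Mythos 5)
Your proof is correct and follows essentially the same route as the paper's: both rest on the formula $\eta_m(K)=\nabla_{L_b^0(K)}/(z\nabla_K)$, the fact that $z\mid\nabla_{L_b^0(K)}$ for a two-component link, the skein recurrence of Lemma~\ref{lemma:skein_conway}, and the evaluation $\det(L)=|\nabla_L(\pm 2i)|$, merely organized as a direct contradiction rather than a chain of contrapositives. Your last paragraph in fact justifies more carefully than the paper does why the polynomial divisibility $\nabla_K\mid\nabla_{L_b^p(K)}$ descends to the integer divisibility $\det(K)\mid\det(L_b^p(K))$, using the parity of the two Conway polynomials.
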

\begin{proof}
Recall that for a link $L$ we have that $\det(L)=|\nabla_L(-2i)|$. Since $\det(K)$ does not divide $\det(L_b^p(K))$, it follows that $\nabla_K(z)$ does not divide $\nabla_{L_b^{p}(K)}(z)$. Hence, by Lemma \ref{lemma:skein_conway}, $\nabla_K(z)$ does not divide $\nabla_{L_b^{0}(K)}(z)$ either. The proof follows by observing that $z|\nabla_{L_b^0(K)}(z)$, since $L_b^0(K)$ is a link with more than one component.
\end{proof}

We are now going to use the moth polynomial to prove the main result of this section.

\begin{rem}
In \cite{Sak86} Sakuma introduced an equivariant concordance invariant $\eta_{(K,\rho)}$ obtained by taking the Koijima-Yamasaki eta-function of a certain link associated with $(K,\rho)$. However, thanks to the symmetries of $J_n$ and \cite[Proposition 3.4]{Sak86}, we know that Sakuma's eta-polynomial always vanishes for $J_n$, $n\in\JO$.
\end{rem}

\begin{lem}\label{lem:det_int}
Let $n \in \JO $. Then there exists $p\in\Z$ such that
$$
\frac{\det(L_b^p(J_n))}{\det(J_n)}\text{ is not an integer}.
$$
\end{lem}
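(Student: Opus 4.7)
The plan is to first reduce the statement to a single concrete determinant computation by means of the Conway skein relation, and then to carry out that computation using the Gordon--Litherland form together with the matrix tree theorem.

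For the reduction, one applies the skein relation appearing in the proof of Lemma~\ref{lemma:skein_conway} at $z=2i$, recalling that $\det(L)=|\nabla_L(2i)|$:
\[
\nabla_{L_b^{p+1}(J_n)}(2i) \;=\; \nabla_{L_b^p(J_n)}(2i) \,+\, 2i\,\nabla_{J_n}(2i).
\]
Since $L_b^p(J_n)$ is a two-component link its Conway polynomial involves only odd powers of $z$, so $\nabla_{L_b^p(J_n)}(2i)=i\alpha_p$ for some integer $\alpha_p$; moreover $\nabla_{J_n}(2i)=\pm\det(J_n)$. The recursion therefore reads $\alpha_{p+1}=\alpha_p\pm 2\det(J_n)$, and consequently $\det(L_b^p(J_n))=|\alpha_p|$ has the same residue modulo $\det(J_n)$ for every $p\in\Z$. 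Hence it suffices to exhibit a single value of $p$ for which $\det(J_n)\nmid\det(L_b^p(J_n))$.

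For that, I would use the explicit braid presentation $J_n=\reallywidehat{(\sigma_1\sigma_2^{-1})^n}$ from Figure~\ref{fig:klein_Jn} and fix a $\rho$-invariant band $B$ containing the chosen half-axis $h$. The band move then yields a diagram of $L_b^p(J_n)$ which, for a suitable choice of framing, is alternating and hence falls under the Gordon--Litherland framework of \S\ref{sec:GL_form}. Checkerboard coloring produces a weighted graph $\Gamma_{n,p}$ with the cyclic ladder structure inherited from the $n$-fold periodicity of the braid word $(\sigma_1\sigma_2^{-1})^n$, and by the matrix tree theorem (Theorem~\ref{thm:matrix_tree}) we have $\det(L_b^p(J_n))=|\T(\Gamma_{n,p})|$. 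Iteratively applying the deletion-contraction identity of Theorem~\ref{thm:DCT} along the rungs of $\Gamma_{n,p}$ reduces $\T(\Gamma_{n,p})$ to a two-term Chebyshev-like recursion with periodic boundary data.

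To conclude, note that $n$ being odd, combined with the identity $L_n^2=L_{2n}+2(-1)^n$ and property~\eqref{eq:determinant}, gives $\det(J_n)=L_{2n}-2=L_n^2$. The Chebyshev-type recursion above admits a closed form expressible in terms of Fibonacci and Lucas numbers, and standard coprimality relations between neighbouring entries of these sequences then show that the resulting residue modulo $L_n^2$ is nonzero. The principal difficulty lies in the combinatorial bookkeeping of the deletion-contraction step: one must track the spanning tree count precisely enough to identify it modulo $L_n^2$, which reduces to a careful but essentially elementary piece of Fibonacci arithmetic once the graph-theoretic setup is in place.
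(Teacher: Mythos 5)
Your opening reduction is sound and essentially reproduces Lemma~\ref{lemma:skein_conway}: the skein relation $\nabla_{L_b^{p+1}(J_n)}(z)=\nabla_{L_b^p(J_n)}(z)+z\nabla_{J_n}(z)$, evaluated at $z=2i$, shows that divisibility of $\det(L_b^p(J_n))$ by $\det(J_n)$ is independent of $p$, so it suffices to treat the one butterfly link handed to you by a concrete band move. The genuine gap is everything after that. The heart of the lemma is precisely the non-divisibility statement, and your proposal defers it to an unexecuted computation: you assert that deletion--contraction along the rungs yields a Chebyshev-type recursion with a closed form in Fibonacci and Lucas numbers, and that ``standard coprimality relations'' then force a nonzero residue modulo $L_n^2$, but neither the explicit weighted graph, nor the closed form, nor the coprimality argument is produced, and there is no a priori reason the residue could not vanish. (You also assume without justification that the band-move diagram can be chosen alternating; this is not needed for the Gordon--Litherland/Goeritz setup of Section~\ref{sec:GL_form}, but it signals that the combinatorial model has not actually been pinned down.) As written, the proposal is a plausible plan whose decisive step is missing.

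The paper avoids the exact arithmetic entirely. It takes the checkerboard graph $\Gamma_n$ of a spanning surface $F_n$ for $J_n$; cutting $F_n$ along the half-axis identifies two vertices $b,c$ and produces the graph $\overline{\Gamma_n}$ of a spanning surface for $L_b^p(J_n)$, so that $\det(J_n)=\T(\Gamma_n)$ and $\det(L_b^p(J_n))=\T(\overline{\Gamma_n})$ by Theorem~\ref{thm:matrix_tree}. Adding an edge of weight $x$ between $b$ and $c$ and applying Theorem~\ref{thm:DCT} gives $\T(\Gamma_n(x))=\T(\Gamma_n)+x\,\T(\overline{\Gamma_n})$, so the two strict inequalities $2\det(J_n)<\det(L_b^p(J_n))<4\det(J_n)$ become the sign statements $\T(\Gamma_n(-1/2))<0$ and $\T(\Gamma_n(-1/4))>0$, which are established by Gershgorin's theorem (Theorem~\ref{thm:gershgorin}) and an inertia computation on a $4\times4$ block. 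Since $\det(J_n)=L_{2n}-2$ is odd (because $3\nmid n$) while $\det(L_b^p(J_n))$ is even (a two-component link), the ratio cannot equal $3$ and hence is not an integer. If you wish to keep your route, you must actually carry the Fibonacci arithmetic to completion; otherwise the estimate-plus-parity argument is what closes the proof.
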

\begin{proof}
Let $F_n$ be the spanning surface for $J_n$ depicted in Figure \ref{fig:spanning_Jn}, where $n=2k+1$.
\vspace{1em}

\begin{figure}[ht]
    \centering
\begin{tikzpicture}
\node[anchor=south west,inner sep=0] at (0,0){\includegraphics[scale=0.6]{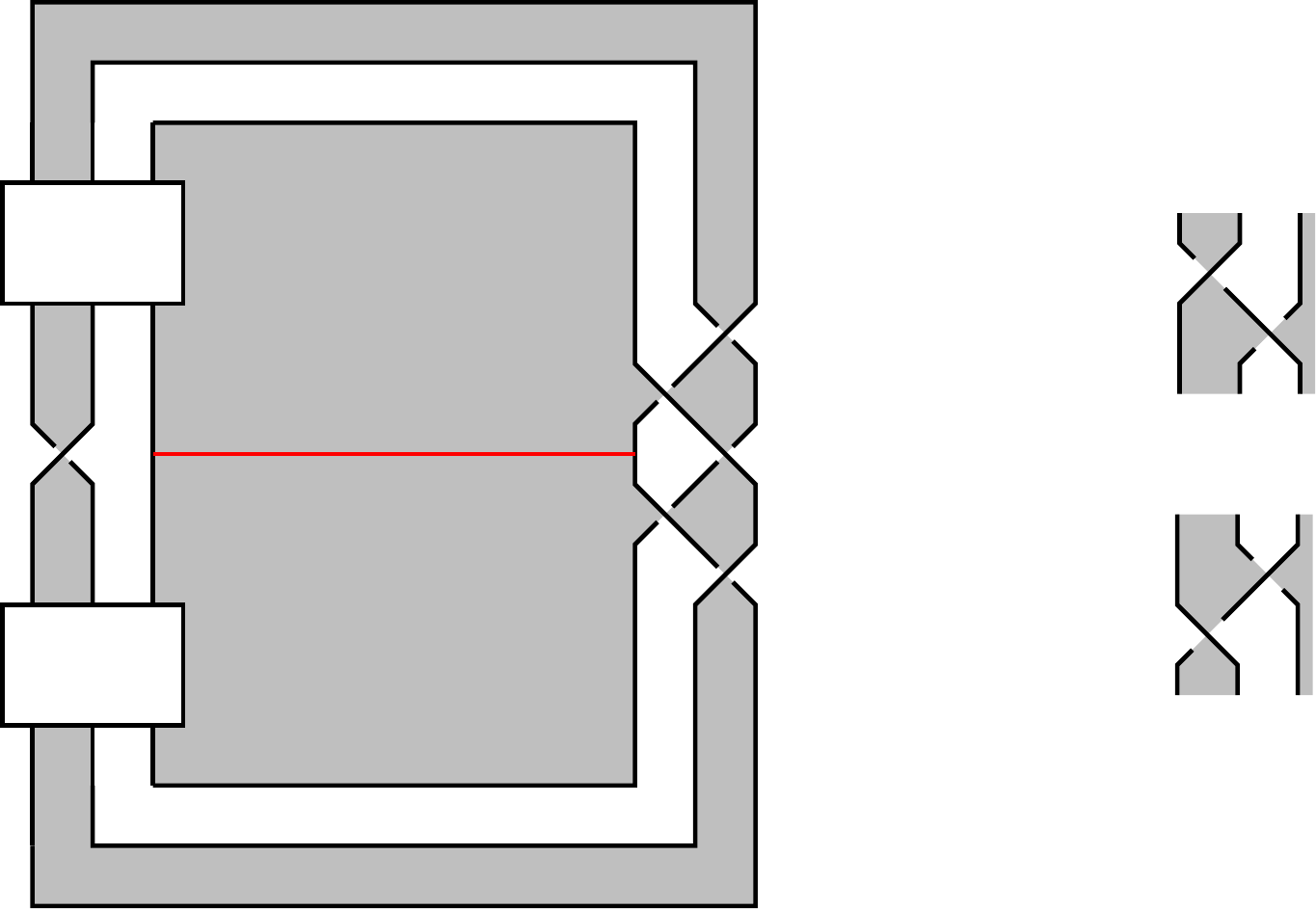}};
\node at (1,7) (b1){$(\sigma_1\sigma_2^{-1})^k$};
\node at (1,2.6) (b2){$(\sigma_2^{-1}\sigma_1)^k$};
\node at (11,6.3) (c1){$\sigma_1\sigma_2^{-1}=$};
\node at (11,3.2) (c2){$\sigma_2^{-1}\sigma_1=$};
\node at (4,5) (h){$h$};

\end{tikzpicture}
    \caption{The spanning surface $F_n$ for $J_n$}
    \label{fig:spanning_Jn}
\end{figure}

Observe that by cutting $F_n$ along the half-axis $h$, we get a spanning surface $\overline{F_n}$ for the $p$-butterfly link of $J_n$ for some $p\in\Z$.
We are now going to show that
\begin{equation}\label{eq:det_ineq}
2\det(J_n)<\det(L^p_b(J_n))<4\det(J_n),
\end{equation}
which is sufficient, since $\det(J_n)$ is odd, while $\det(L_b^p(J_n))$ is even, hence it cannot be that $$\det(L_b^p(J_n))=3\det(J_n).$$ 

Let $\Gamma_n$ (see Figure \ref{fig:goeritz_graph}) be the graph associated with $F_n$ (see Section \ref{sec:GL_form}).
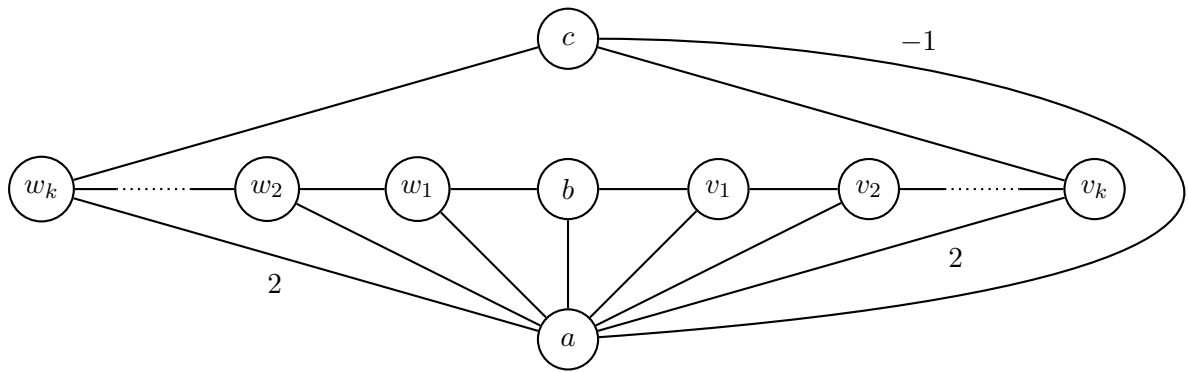
\begin{figure}[ht]
    \centering

\begin{tikzpicture}[thick,main/.style = {draw, circle},minimum width =8mm]

\node[main] at (0,0) (b){$b$};
\node[main] at (0,-2) (a){$a$};
\node[main] at (0,2) (c){$c$};
\node[main] at (2,0) (v1){$v_1$};
\node[main] at (4,0) (v2){$v_2$};
\node[main] at (7,0) (vk){$v_k$};
\node[main] at (-2,0) (w1){$w_1$};
\node[main] at (-4,0) (w2){$w_2$};
\node[main] at (-7,0) (wk){$w_k$};
%\node[main] at (0,2) (d){$d$};
%\draw[-] (c) -- (d) -- (b);

\draw[-] (a) -- (b);
\draw[-] (b) -- (v1);
\draw[-] (v1) -- (v2);
\draw[-] (v2) -- (5,0);
\draw[dotted] (5,0) -- (6,0);
\draw (6,0) -- (vk);
\draw[-] (b) -- (w1);
\draw[-] (w1) -- (w2);
\draw[-] (a) -- (v1);
\draw[-] (a) -- (v2);
\draw[-] (a) -- (w1);
\draw[-] (a) -- (w2);
\draw[-] (w2) -- (-5,0);
\draw[dotted] (-5,0) -- (-6,0);
\draw (-6,0) -- (wk);

\draw (vk) -- (c) -- (wk);

\draw[-] (a) -- node[midway, below left] {$2$} (wk) ;
\draw[-] (a) -- node[pos=0.7, below right] {$2$} (vk) ;

\draw (a) .. controls +(14,1) and +(7,0) .. (c) node[pos=0.8, above right] {$-1$};
\end{tikzpicture}

    \caption{The graph associated with $F_n$.}
    \label{fig:goeritz_graph}
\end{figure}
The corresponding graph $\overline{\Gamma_n}$ for $\overline{F_n}$ is easily obtained from $\Gamma_n$ by identifying the vertices $b$ and $c$.
Recall that from Theorem \ref{thm:matrix_tree} and the discussion in Section \ref{sec:GL_form}, we have
\begin{equation}
\label{eq:det_eq}
\det(J_n)=\T(\Gamma_n)\quad\text{and}\quad\det(L_p(J_n))=\T(\overline{\Gamma_n}).
\end{equation}

Denote by $\Gamma_n(x)$ be the graph obtained by adding an edge $e$ with label $x$ between $b$ and $c$ to $\Gamma_n$.
Applying Theorem \ref{thm:DCT} to the edge $e$ of $\Gamma_n(x)$, we get
\begin{equation}\label{eq:gamma_nx}
\T(\Gamma_n(x))=\T(\Gamma_n)+x\T(\overline{\Gamma_n}).
\end{equation}

Using (\ref{eq:det_eq}) and (\ref{eq:gamma_nx}), we can see that the inequalities (\ref{eq:det_ineq}) are equivalent to showing that
$$
\T(\Gamma_n(-1/4))>0\quad\text{and}\quad\T(\Gamma_n(-1/2))<0.
$$
Observe now that the matrix $\L(\Gamma_n(-1/4);a)$ (see Section \ref{sec:graphs}) is positive definite: multiplying by $3$ both the row and column corresponding to the vertex $c$ we get a dominant diagonal matrix with positive diagonal entries, which has all positive eigenvalues by Gershgorin Theorem. Hence $\T(\Gamma_n(-1/4))=\det(\L(\Gamma_n(-1/4);a))>0$.

On the other hand, it is not difficult to see that $\L(\Gamma_n(-1/2);a)$ has an inertia $(n,1,0)$ i.e. it is nonsingular and it has $n$ positive eigenvalues and $1$ negative eigenvalue. Removing the column and row corresponding to $c$ we get a positive definite matrix by Gershgorin Theorem, hence $\L(\Gamma_n(-1/2);a)$ has at least $n$ positive eigenvalues. However, $\L(\Gamma_n(-1/2);a)$ has at least (and hence exactly) one negative eigenvalue: the restriction to the subspace spanned by the vertices $c,b,v_k,w_k$ is given by
$$
\begin{pmatrix}
    1/2&1/2&-1&-1\\
    1/2&5/2&0&0\\
    -1&0&4&0\\
    -1&0&0&4
\end{pmatrix}
$$
which has negative determinant. In particular, $\T(\Gamma_n(-1/2))=\det(\L(\Gamma_n(-1/2);a))<0$.

\end{proof}

\begin{prop}\label{prop:lin_indep}
Let $\mathcal{F} \subset \JO$ be an infinite family such that if $m,n\in\mathcal{F}$ and $n\neq n$ then $m$ and $n$ are coprime. Let $J_\mathcal{F}$ be the subgroup of $\CT$ generated by $\{J_n\;|\;n\in\mathcal{F}\}$. Then
$$ J^{ab}_\mathcal{F} = J_\mathcal{F}/\left[J_\mathcal{F},J_\mathcal{F}\right]\cong \Z^\infty. $$
\end{prop}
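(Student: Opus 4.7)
The plan is to detect a $\Z^\infty$ inside $J_\mathcal{F}^{ab}$ using the moth polynomial homomorphism $\eta_m : \CT \to \Q(z)$ from Section~\ref{ssec:buttefly_moth}. Since the target is abelian, the restriction $\eta_m|_{J_\mathcal{F}}$ descends to a homomorphism $\overline{\eta_m} : J_\mathcal{F}^{ab} \to \Q(z)$. The family $\{J_n\}_{n \in \mathcal{F}}$ induces a natural surjection $\bigoplus_{n \in \mathcal{F}} \Z \twoheadrightarrow J_\mathcal{F}^{ab}$, so it suffices to prove that the composition $\bigoplus_{n \in \mathcal{F}} \Z \to \Q(z)$ sending $e_n \mapsto \eta_m(J_n)$ is injective: this will force both maps to be isomorphisms, yielding $J_\mathcal{F}^{ab} \cong \Z^\infty$.

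The two key inputs are as follows. For each $n \in \mathcal{F}$, write $\eta_m(J_n) = f_n(z)/g_n(z)$ in lowest terms with $f_n, g_n \in \Z[z]$ coprime. Combining Lemma~\ref{lem:det_int} with Corollary~\ref{cor:eta_determinant} gives $\deg g_n \geq 1$ and $g_n(z) \mid \nabla_{J_n}(z)$ for every $n \in \mathcal{F}$. Secondly, I would show that the Conway polynomials $\{\nabla_{J_n}\}_{n \in \mathcal{F}}$ are pairwise coprime in $\Q[z]$. Using property~\eqref{eq:roots}, a common root of $\Delta_{J_m}$ and $\Delta_{J_n}$ would yield $\cos(2k\pi/n) = \cos(2k'\pi/m)$ with $1 \leq k \leq \lfloor n/2 \rfloor$ and $1 \leq k' \leq \lfloor m/2 \rfloor$, hence $km \equiv \pm k'n \pmod{mn}$; reducing modulo $n$ and using $\gcd(m,n) = 1$ forces $n \mid k$, which contradicts $1 \leq k < n$. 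The change of variables $z = t^{1/2} - t^{-1/2}$ is a bijection on roots, so this transfers to coprimality of the $\nabla_{J_n}$.

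Granted these ingredients, a hypothetical relation $\sum_{i=1}^\ell a_i \eta_m(J_{n_i}) = 0$ with distinct $n_i \in \mathcal{F}$ and $a_i \in \Z$ clears to
\[
\sum_{i=1}^\ell a_i\, f_{n_i}(z) \prod_{j \neq i} g_{n_j}(z) = 0 \quad \text{in } \Z[z].
\]
For each fixed index $k$, the polynomial $g_{n_k}$ is coprime to $f_{n_k}$ by construction and coprime to every $g_{n_j}$ with $j \neq k$ (since $g_{n_k} \mid \nabla_{J_{n_k}}$ and $g_{n_j} \mid \nabla_{J_{n_j}}$ are coprime, by the previous paragraph). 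Reducing the displayed equation modulo $g_{n_k}(z)$ therefore yields $g_{n_k}(z) \mid a_k$ in $\Q[z]$, and since $\deg g_{n_k} \geq 1$ while $a_k$ is a constant, $a_k = 0$. This proves injectivity and concludes the argument.

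I expect the main obstacle to be the pairwise coprimality of $\nabla_{J_n}$ for $n \in \mathcal{F}$: it is the only step requiring genuine input beyond the formal properties of $\eta_m$, and it hinges on a careful elementary analysis of the root description in property~\eqref{eq:roots}. Everything else is a standard clearing-of-denominators manipulation in the UFD $\Z[z]$, once the denominator bound $g_n \mid \nabla_{J_n}$ from the moth polynomial machinery is in hand.
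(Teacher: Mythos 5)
Your proposal is correct and follows essentially the same route as the paper: detect $\Z$-linear independence of the classes $\{\eta_m(J_n)\}_{n\in\mathcal{F}}$ in $\Q(z)$, using Lemma~\ref{lem:det_int} together with Corollary~\ref{cor:eta_determinant} to get $\deg g_n>0$ and $g_n\mid\nabla_{J_n}$, and property~(\ref{eq:roots}) to get pairwise coprimality of the Conway polynomials. The only difference is one of exposition: you spell out the clearing-of-denominators step and the cosine computation that the paper leaves implicit.
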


\begin{proof}
We prove that $\{\eta_m(J_n)(z)\;|\;n\in\mathcal{F}\}$ are $\Z$-linearly independent in $\Q(z)$. If $\operatorname{gcd} (p,q)=1$, by the property (\ref{eq:roots}), the sets of roots of the Alexander polynomials of $J_p$ and $J_q$ do not intersect, hence $\Delta_{J_p}(t)$ and $\Delta_{J_q}(t)$ are coprime polynomials. This in turns implies that the Conway polynomials $\nabla_{J_p}(t)$ and $\nabla_{J_q}(t)$ are coprime. Then the linear independence follows by the use of Lemma \ref{lem:det_int} and Corollary \ref{cor:eta_determinant}.
\end{proof}

\subsection{String links and Milnor invariants}
In \cite{DPF23} Framba and the first author used the close relation between strongly invertible knots and \emph{string links} to prove that the equivariant concordance group $\CT$ is not solvable.
In particular, they considered a homomorphism
$$
\widetilde{\C}\xrightarrow{\varphi\circ\pi}\C(2),
$$
where $\C(2)$ is the \emph{concordance group of string links on 2 strings}, introduced in \cite{LeD88} (see \cite[Section 3]{DPF23} for details), and they proved that $\widetilde{\C}$ is not solvable by using Milnor invariants for string links (see \cite{HL98}).

We now use a similar approach in order to prove that two given knots in $\widetilde{\C}$ do not commute: we determine their image in $\C(2)$ and then we compute the Milnor invariants of their commutator to show that it is nontrivial. For details, one can consult \cite{DPF23}.

\begin{lem}\label{lemma:commutator}
The commutator between $J_5$ and $J_7$ is not equivariantly slice.
\end{lem}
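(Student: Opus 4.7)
The plan is to use the homomorphism $\varphi\circ\pi:\widetilde{\C}\to\C(2)$ introduced in \cite{DPF23}, together with Milnor $\bar{\mu}$-invariants for string links, which are genuine concordance invariants in this setting by Habegger and Lin \cite{HL98}. Since an equivariantly slice DSI knot maps to a slice string link in $\C(2)$, and slice string links have all $\bar{\mu}$-invariants vanishing, it suffices to exhibit one nonvanishing Milnor invariant of the image of the commutator $[J_5,J_7]=J_5\widetilde{\#}J_7\widetilde{\#}(J_5)^{-1}\widetilde{\#}(J_7)^{-1}$ in $\C(2)$.

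First, I would produce explicit $2$-string link representatives $L_5,L_7\in\C(2)$ for the images of $J_5$ and $J_7$ under $\varphi\circ\pi$. The braid presentation $J_n=\reallywidehat{(\sigma_1\sigma_2^{-1})^n}$, together with the strong inversion axis displayed in Figure~\ref{fig:klein_Jn}, makes the fixed half-axis and its complement visible. Cutting each $J_n$ along a small neighborhood of the half-axis $h$ produces an equivariant tangle whose $\varphi\circ\pi$-image is an explicit $2$-string link, computable directly from the braid word. Since $\varphi\circ\pi$ is a group homomorphism, the image of $[J_5,J_7]$ is the string link commutator $[L_5,L_7]$.

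Next, I would compute the $\bar{\mu}$-invariants of $[L_5,L_7]$ via Magnus expansion in the nilpotent quotients of the free group on the two meridians. For any $2$-string link the length-$2$ invariant is the linking number, which vanishes on a commutator; the first potentially nonzero invariants of $[L_5,L_7]$ are therefore of length $3$ (or $4$, if length $3$ is inconclusive). By standard commutator calculus, the relevant invariant of $[L_5,L_7]$ is a polynomial in the length-$2$ and length-$3$ Milnor numbers of $L_5$ and $L_7$, which can in principle be read off from their string link presentations.

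The main obstacle will be the concrete computation: extracting enough of the length-$\leq 3$ Milnor invariants of $L_5$ and $L_7$ from the cut-open braid words $(\sigma_1\sigma_2^{-1})^5$ and $(\sigma_1\sigma_2^{-1})^7$, and then verifying that the resulting expression for $\bar{\mu}([L_5,L_7])$ is nonzero. Once this algebraic nonvanishing is established, $[L_5,L_7]$ is nontrivial in $\C(2)$, hence $[J_5,J_7]$ is nontrivial in $\widetilde{\C}$, and in particular not equivariantly slice.
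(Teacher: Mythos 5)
Your proposal follows essentially the same route as the paper: both pass to $\C(2)$ via $\varphi\circ\pi$, represent $[J_5,J_7]=J_5\widetilde{\#}J_7\widetilde{\#}J_5^{-1}\widetilde{\#}J_7^{-1}$ as an explicit $2$-string link read off from the cut-open presentations of $J_5$ and $J_7$, and detect nontriviality with Milnor invariants of string links, which obstruct (equivariant) sliceness by Habegger--Lin. The one place your plan is too optimistic is the expected degree: the paper's computation (carried out with the program \texttt{stringcmp}) shows that all Milnor invariants of $\varphi\circ\pi([J_5,J_7])$ vanish below degree $6$, so a Magnus-expansion search at lengths $3$ and $4$ would come back empty, and the honest calculation must be pushed to length $6$ --- which is precisely why the paper delegates it to a computer rather than doing the commutator calculus by hand.
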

\begin{proof}
Following the procedure described in \cite[Remark 4.5]{DPF23}, we determine the image of $[J_5,J_7]=J_5\widetilde{\#}J_7\widetilde{\#}J_5^{-1}\widetilde{\#}J_7^{-1}$ in $\C(2)$, depicted in Figure \ref{fig:commutator}.

\begin{figure}[ht]
    \centering
    
\begin{tikzpicture}
\node[anchor=south west,inner sep=0] at (0,0){\includegraphics[scale=0.5]{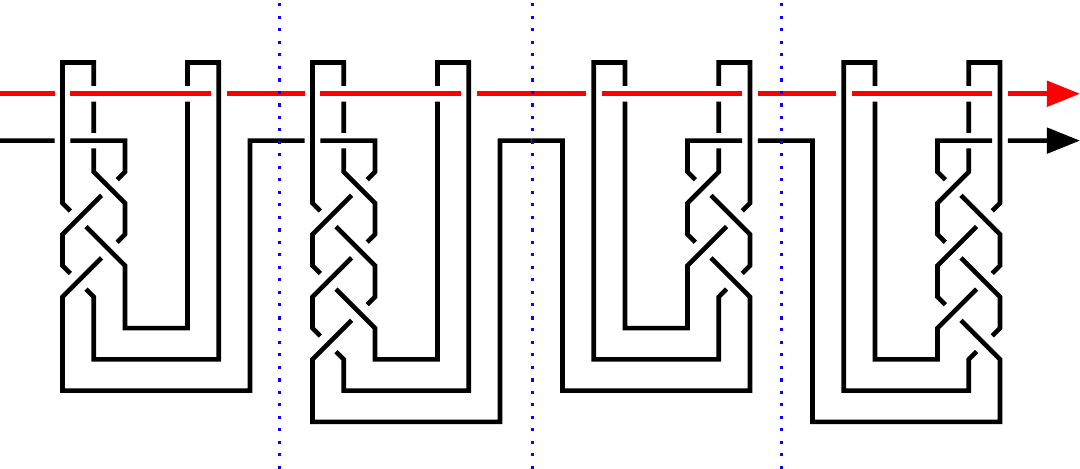}};
\node at (1.2,4) (b3){$J_5$};
\node at (3.5,4) (b4){$J_7$};
\node at (5.7,4) (b37){$J_5^{-1}$};
\node at (7.8,4) (b43){$J_7^{-1}$};
\end{tikzpicture}

\vspace{3mm}
\raggedright
\begin{itemize}
\color{red}
    \item[\texttt{l1: }]\texttt{30 26 51 43 65 61 84 76}
\color{black}
\item[\texttt{l2: }] \texttt{30 34 35 4 24 4 22 27 51 47 46 7 56 53 37 7 39 42 64 69 11 67 11 56 57 61 85 88 16 90 74 71 16 81 80 76}
\end{itemize}
    \caption{The $2$-string link representing $\varphi\circ\pi([J_5,J_7])$.}
    \label{fig:commutator}
\end{figure}

Using the computer program \texttt{stringcmp} \cite{TKS13}, we find out that $\varphi\circ\pi([J_5,J_7])$ has nontrivial Milnor invariants. The first nontrivial invariant is in degree 6. In Figure \ref{fig:commutator}, we also report the input data needed to run \texttt{stringcmp}, which codifies the longitudes of the two components of the string link.
\end{proof}
\begin{proof}[Proof of Theorem \ref{thm: mainthm3}]
Let $\mathcal{J}$ be the subgroup of $\widetilde{\C}$ generated by $\{J_p\;|\;p\geq5\text{ prime}\}$. By Proposition \ref{prop:lin_indep}, we know that $\mathcal{J}^{ab}\cong\Z^\infty$. Moreover, it is spanned by negative amphichiral knots, therefore $\mathfrak{f}(\mathcal{J})\subset\C$ is $2$-torsion. Finally, by Lemma \ref{lemma:commutator}, we know that $\mathcal{J}$ is not abelian.
\end{proof}

\bibliography{references}
\bibliographystyle{amsalpha}

\end{document}